\documentclass[a4paper]{amsart}
\usepackage[english]{babel}
\usepackage{amssymb}
\usepackage{amsmath}
\usepackage{amsthm}
\usepackage{graphicx}
\usepackage{fancyhdr}
\usepackage{bbm}
\usepackage{pgfplots}

\newfont{\bssten}{cmssbx10}
\newfont{\bssnine}{cmssbx10 scaled 900}
\newfont{\bssdoz}{cmssbx10 scaled 1200}

\usepackage{latexsym,amssymb,amsthm,amsxtra}
\usepackage{amsmath}
\usepackage{stmaryrd}
\usepackage{mathrsfs}
\usepackage{tikz}
\usepackage{pgf}
{\bf}{\it}
{\bf}{\it}
\newtheorem{theorem}{Theorem}
\newtheorem{definition}{Definition}
\newtheorem{lemma}{Lemma}
\newtheorem{remark}{Remark}
\newtheorem{proposition}{Proposition}
\newtheorem{corollary}{Corollary}

\def\Z{{\mathbb Z}}

\def\ra{\longrightarrow} 

\def\bp{\mathbf{p}}
\def\bx{\mathbf x}
\def\by{\mathbf y}
\def\ba{\mathbf a}
\def\bb{\mathbf b}
\def\bs{\mathbf s}

\def\sb{\text{\textsc{b}}}

\def\sf{\text{\textsc{f}}}

\def\ni{\text{\textsc{ni}}}

\def\esp#1{{\mathbb E}\left[#1\right]}

\newcommand{\pae}[1]{\mbox{$\lfloor \kern-1pt #1 \kern-1pt \rfloor$}}
\newcommand{\paep}[1]{\mbox{$\lceil \kern-1pt #1 \kern-1pt \rceil$}}

\def\R{{\mathbb R}}
\def\maR{{\mathcal R}}
\newcommand{\ord}[1]{\maR\left(#1\right)}
\newcommand{\pos}[1]{n^{\small{+}}#1}






\newcommand\gre{\textbf{e}}

\newcommand\maA{{\mathcal A}}

\newcommand\maS{{\mathcal S}}



\def\ohad#1{\textcolor{red}{#1}}
\def\pasc#1{\textcolor{blue}{#1}}

\newcommand{\af}{\alpha}
\newcommand{\lm}{\lambda}
\newcommand{\ep}{\epsilon}

\newcommand{\deq}{\stackrel{\rm d}{=}}

\newcommand{\qandq}{\mbox{ and }}

\newcommand{\qiffq}{\quad\mbox{if and only if}\quad}

\newcommand{\qforallq}{\quad\mbox{for all }}

\newcommand{\RR}{{\mathbb R}}
\newcommand{\ZZ}{{\mathbb Z}}

\def\tinf{\rightarrow\infty}

\newcommand{\be}{\begin{equation}}
\newcommand{\bes}{\begin{equation*}}
\newcommand{\ee}{\end{equation}}
\newcommand{\ees}{\end{equation*}}
\newcommand{\bequ}{\begin{equation}}
\newcommand{\eeq}{\end{equation}}
\newcommand{\bsplit}{\begin{split}}
\newcommand{\esplit}{\end{split}}
\newcommand{\bea}{\begin{eqnarray}}
\newcommand{\eea}{\end{eqnarray}}
\newcommand{\beas}{\begin{eqnarray*}}
\newcommand{\eeas}{\end{eqnarray*}}

\def\Vcr{V_{\text{cr}}}
\def\gsc{\preceq_{\textsc{gsc}}}



\title{Stability of Parallel Server Systems}
\author{Pascal Moyal and Ohad Perry}
\begin{document}

\begin{abstract}
The fundamental problem in the study of parallel-server systems is that of finding and analyzing ``good'' routing policies of arriving jobs to the servers.
It is well known that, if full information regarding the workload process is available to a central dispatcher, then the {\em join the shortest workload} (JSW) policy,
which assigns jobs to the server with the least workload, is the optimal assignment policy, in that it maximizes server utilization, and thus minimizes sojourn times.
The {\em join the shortest queue} (JSQ) policy is an efficient dispatching policy when information is available only on the number of jobs with each of the servers,
but not on their service requirements.
If information on the state of the system is not available, other dispatching policies need to be employed, such as the power-of-$d$ routing policy,
in which each arriving job joins the shortest among $d \ge 1$ queues
sampled uniformly at random. (Under this latter policy, the system is known as {\em the supermarket model}.)
In this paper we study the stability question of parallel server systems assuming that routing errors occur,
so that arrivals may be routed to the ``wrong'' (not to the smallest) queue with a positive probability.
We show that, even if a ``non-idling'' dispatching policy is employed, under which new arrivals are always routed to an idle server, if any is available,
the performance of the system can be much worse than under the policy that chooses one of the servers uniformly at random.
More specifically, we prove that the usual traffic intensity $\rho < 1$ does not guarantee that the system is stable.
\end{abstract}

\maketitle

\section{Introduction} \label{secIntro}

We consider a parallel-server system with $s \ge 2$ statistically-homogeneous servers, each providing service at rate $\mu$,
that is fed by a rate-$\lm$ Poisson arrival process of statistically identical jobs (or customers). For each server there is a dedicated infinite buffer in which jobs queue, waiting for their turn to be served.
Upon arrival, a job is routed to one of the $s$ servers according to some pre-specified dispatching (routing) rule, with no jockeying between the queues allowed.
In this setting, one seeks a ``good'' routing policy of jobs to the servers, e.g., a policy ensuring that steady state waiting times are minimized, or that the total throughput rate is maximized.
If the workload at each queue can be computed, then it is natural to employ the Join the Shortest Workload (JSW) routing policy, under which an arriving job is routed to
the server with the least workload among all $s$ servers (together with some tie-breaking rule). However, if the workload is unknown, as is often the case in practice,
one may opt to employ the Join-the-Shortest Queue (JSQ) control, which routes an arriving job to the server with the smallest number of jobs.
Indeed, JSW was shown to minimize the workload process in \cite{DaleyOptimal}, whereas
JSQ has been shown to be throughput maximizing in terms of stochastic order, when the service-time distribution has a non-decreasing failure rate \cite{weber78},
and in particular, when the service times are exponentially distributed \cite{Winston77}.

However, even the queue at each server is not always known: In some settings, the number of customers in each queue is estimated, either by the arriving customers who are free to choose
which queue to join (as in a supermarket or security lanes in airports), or by a central dispatcher (as is often the case in passport-checking stations, for example).
Even in automated settings the queue lengths may not be known. For example, information regarding the queues to each of the servers in web-server farms requires constant communication between the servers and the job dispatchers,
slowing down the response time, and is thus not always available; e.g., see \cite{Lu11}.

For this reason, other routing policies have been considered in the literature, most notably the ``power-of-$d$''
policy, which gives rise to the so-called ``supermarket model'' \cite{Mitzenmacher96}. Under this policy, upon each arrival
$d$ servers are chosen uniformly at random, and that arrival is routed to the server with the smallest number of jobs among the $d$ sampled queues, with ties broken uniformly at random.
We denote this routing rule by PW($d$) and note that $d=1$ corresponds to uniform routing (i.e. any incoming job is sent to a queue that is chosen uniformly at random),
whereas $d=s$ corresponds to JSQ.

\subsection{Motivation and Goals} \label{secMotivation}
We are motivated by the fact that, and unlike the idealized settings considered in the literature, routing errors can occur in practice, so that jobs are not always routed in an efficient manner.
In this regard, our main goal is to demonstrate that routing errors can have substantial negative impacts on performance. To this end, we study a particular form of error,
under which arrivals are sent to the ``wrong'' queue (not the smallest) with a fixed probability, and show that the system might not be stable in this case, even if its total service rate is larger than
the rate at which work arrives, i.e., if the traffic intensity to the system is smaller than $1$.

Such errors are likely to occur when JSW is employed, because the actual workload at each server can only be estimated, unless the server is idle (in which case its workload is zero),
but can also occur under JSQ, especially when there is no central dispatcher, and customers choose which queue to join.
We focus on the latter JSQ policy, since under appropriate distributional assumptions (Poisson arrival process and exponentially distributed service times), the queue process evolves as a
continuous-time Markov chain (CTMC), whereas under JSW, the analysis of the queue process requires a continuous-space Markov representation. (Even under JSQ, exact analyses and steady-state computations of the queue
are intractable, and most of the literature is concerned with asymptotic approximations; see Section \ref{secLit} below.)
It will become intuitively clear, and supported by simulation examples in Section \ref{sec:simu}, that our results extend to the JSW case.

Even though our main motivation is to study the impact of routing errors, we treat the allocation of jobs to servers as a routing policy.
We do this for mathematical convenience, as it allows us to treat PW($d$), and therefore also JSQ, as a special case of the family of allocation policies we consider.
Specifically, we assume that the dispatcher (or the arriving customer) chooses correctly the shortest queue with probability $p_1$,
the second-shortest queue with probability $p_2$, and so forth.
We also consider a {\bf ``non-idling'' case}, in which routing errors are made only when all servers are busy, so that the dispatcher (or arriving customer) always chooses an idle server, if such a server is available,
and otherwise makes errors as was just described.
To show that such errors can lead to extreme departures from the desired behavior under JSQ,
we characterize the stability region under the allocation policy as a function of the system's parameters and the error probabilities,
and prove that the usual traffic condition $\rho := \lm/(s\mu) < 1$ does not guarantee that the system is stable, even in the non-idling case.

\subsection{Background: PW($d$) and Related Routing Policies}
Note that it is not immediately clear that the condition $\rho < 1$ does not imply that the system is stable, especially under the non-idling allocation mechanism,
because the JSQ policy (and of course, JSW) leaves a lot of ``room'' for making routing errors, as can be seen by comparing a system operating under JSQ to the same system operating under uniform routing.
Clearly, uniform routing induces a lot of ``avoidable'' idleness in the system, because arrivals are often routed to busy servers even if there are idle servers present.
Nevertheless, by symmetry, the rate at which jobs arrive at each server is the same under this policy, implying that the traffic intensity at each server separately is smaller than $1$ whenever the traffic intensity $\rho$ to the
whole system is smaller than $1$.
When the arrival process to the system is Poisson, this follows directly from the splitting property of the Poisson process, which implies that each server operates as an $M/G/1$
queue independently of all other servers.
Indeed, if service times are exponentially distributed, in addition to having a Poisson arrival process, so that
the queue process evolves as a CTMC, the improvement that JSQ provides over uniform routing follows from existing results, which we now review.

Let $Q_\Sigma^{(d)}(t)$ denote the total number of jobs in the system at time $t \ge 0$ under PW($d$). Theorem 4 in \cite{Turner98} implies that\footnote{Theorem 4 in \cite{Turner98}
proves a  monotone convex order domination, from which sample-path stochastic order follows immediately}, if $d_1 > d_2$, then $Q_\Sigma^{(d_1)} \le_{st} Q_\Sigma^{(d_2)}$,
where $\le_{st}$ denotes sample-path stochastic-order.
(That is, there exists a coupling of the two processes, such that $Q_\Sigma^{(d_1)}(t) \le Q_\Sigma^{(d_2)}(t)$ w.p.1 for all $t > 0$, provided that the inequality holds at time $t = 0$.)
In particular, for $s > 2$,
\be
\label{eq:ineqPW}
Q_\Sigma^{(s)} \le_{st} Q_\Sigma^{(d)} \le_{st} Q_\Sigma^{(1)}, \quad 1 < d \le s.
\ee
The stability of a parallel-server system under PW($d$) readily follows. To state this result formally, we say that a parallel-server system is ``Markovian'' if its multi-dimensional queue process
evolves as a CTMC. In particular, the arrival process is Poisson and the service times are independent and identically distributed
(i.i.d.) exponentially distributed random variables, that are independent of the arrival process and of the state of the system.
\begin{corollary} \label{coroStable}
For a Markovian parallel-server system with $s$ servers operating under PW($d$), $1 \le d \le s$, the condition $\rho := \lm/(s\mu) < 1$ is necessary and sufficient in order for the queue process to be an ergodic CTMC.
\end{corollary}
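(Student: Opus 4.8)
The plan is to read \eqref{eq:ineqPW} as a genuine sandwich and to reduce the ergodicity question for a generic $d$ to the two extreme policies: uniform routing ($d=1$), which dominates $Q_\Sigma^{(d)}$ from above, and JSQ ($d=s$), which lies below it. For each of the two directions I would first settle the relevant extreme policy by an elementary birth--death computation, and then transfer the conclusion to arbitrary $1\le d\le s$ using the coupling underlying \eqref{eq:ineqPW}. The one point that needs real care, which I postpone to the end, is that \eqref{eq:ineqPW} only controls the \emph{scalar} total count $Q_\Sigma^{(d)}$, whereas the statement concerns ergodicity of the full \emph{multidimensional} queue-length CTMC.

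For sufficiency I would assume $\rho<1$ and treat the upper endpoint $d=1$. By the splitting property of the Poisson process, under uniform routing each server receives an independent Poisson($\lm/s$) stream and, with exponential services, evolves as an $M/M/1$ queue of load $(\lm/s)/\mu=\rho<1$; hence the vector chain is a product of $s$ independent ergodic $M/M/1$ chains, so $Q_\Sigma^{(1)}(t)$ converges to a proper law and the family $\{Q_\Sigma^{(1)}(t):t\ge0\}$ is tight. From $Q_\Sigma^{(d)}\le_{st}Q_\Sigma^{(1)}$ I would then deduce $\limsup_t\pr{Q_\Sigma^{(d)}(t)>K}\le\limsup_t\pr{Q_\Sigma^{(1)}(t)>K}\to0$ as $K\to\infty$, i.e.\ tightness of $\{Q_\Sigma^{(d)}(t)\}$.

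For necessity I would assume $\rho\ge1$ and argue directly on the total count, uniformly in $d$ (this automatically covers the lower endpoint $Q_\Sigma^{(s)}$). In any PW($d$) system the total count rises by one at rate $\lm$ and falls by one at rate $\mu B$, where $B\le s$ is the current number of busy servers, so its drift is $\lm-\mu B\ge\lm-s\mu\ge0$ at every state. Taking the total count as a Lyapunov function, a converse Foster--Lyapunov (non--positive-recurrence) criterion for irreducible CTMCs with bounded jumps applies: for $\rho>1$ the drift is bounded below by $\lm-s\mu>0$, forcing transience, while for $\rho=1$ I would couple $Q_\Sigma^{(d)}$ from below with an $M/M/1$ queue of arrival rate $\lm$ and service rate $s\mu$ (legitimate since its departure rate never exceeds $s\mu$), which is null recurrent and whose marginals escape to infinity. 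In both cases $Q_\Sigma^{(d)}(t)\to\infty$ in probability, so $\{Q_\Sigma^{(d)}(t)\}$ is not tight; equivalently, a steady state would force the throughput to equal $\lm$, contradicting the bound $s\mu\le\lm$ on the maximal departure rate.

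The hard part will be converting tightness, respectively non-tightness, of the scalar $Q_\Sigma^{(d)}$ into positive recurrence, respectively non--positive recurrence, of the full chain. Two ingredients are needed. First, irreducibility of the PW($d$) queue-length chain on $\N^s$: this is routine but should be verified, since for $d\ge2$ unbalanced configurations are reached only indirectly (for instance by first building up two queues and then letting one drain). Second, for an irreducible CTMC whose sublevel sets $\{Q_\Sigma\le K\}$ are finite, tightness of the time-marginals forces a stationary distribution---hence positive recurrence and ergodicity---via a Krylov--Bogolyubov averaging argument, whereas divergence of $Q_\Sigma$ rules positive recurrence out. Combining these with the two cases above pins the ergodicity threshold exactly at $\rho<1$.
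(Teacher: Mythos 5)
Your proposal is correct and follows essentially the same route as the paper's proof: Poisson splitting reduces $d=1$ to $s$ independent $M/M/1$ queues, the sample-path order \eqref{eq:ineqPW} transfers stability to general $d$, and a lower-bounding $M/M/1$ queue with rates $\lm$ and $s\mu$ gives necessity when $\rho\ge 1$. The only divergence is your final tightness-plus-Krylov--Bogolyubov step, which the paper shortcuts by noting that the coupling forces $Q_\Sigma^{(d)}(t)=0$ whenever $Q_\Sigma^{(1)}(t)=0$, so the empty state --- a single state of the multidimensional chain --- inherits positive recurrence directly.
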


\begin{proof}
It is easy to see that $Q_\Sigma^{(d)}$ is an irreducible CTMC.
If $\rho \ge 1$, then $Q_\Sigma^{(d)}$ is either null recurrent or transient, because it is bounded from below, in sample-path stochastic order,
by the number-in-system process in an $M/M/1$ queue with arrival rate $\lm$ and service rate $s\mu$.
On the other hand, if $\rho < 1$, then $Q_\Sigma^{(1)}$ is ergodic, because it evolves as
$s$ independent $M/M/1$ queues, each with arrival rate $\lm/s$ and service rate $\mu$. In particular the empty state (zeroth vector) is positive recurrent for the CTMC $Q_\Sigma^{(1)}$,
and, by virtue of \eqref{eq:ineqPW}, also for $Q_\Sigma^{(d)}$, $1 < d \le s$.
\end{proof}

A more quantitative analysis can be carried out asymptotically, by taking the number of servers $s$ to infinity, assuming that the arrival rate grows proportionally to $s$.
As was shown in \cite{Mitzenmacher96, Vved96}, the steady-state probability that an arrival is routed to a queue of length at least $k$ is $\rho^{d^k}$, i.e., it is doubly exponential in $k$ for $d \ge 2$,
as opposed to exponential when $d = 1$ (which is tantamount to uniform routing).
The dramatic differences between the {\em maximum} queue length in stationarity in the cases $d=1$ and $d \ge 2$
is demonstrated in \cite{Luczak06}, which shows that the maximum queue length is of order $\ln(s) / \ln(1/\lm)$ when $d=1$, and of order $\ln\ln(s)/\ln(d)$ when $d \ge 2$ with probability converging to $1$ as
$s \ra \infty$.
Further, heavy-traffic analysis shows that the performance under PW($d$), for any fixed $d < s$, is substantially worse than under JSQ.
In particular, considering a sequence of systems indexed by the number of servers $s$, and letting $\lm_s$ denote the arrival rate to system $s$,
\cite{Gamarnik_JSQ} and \cite{Gamarnik_supermarket} analyze a system operating under JSQ and PW($d$), respectively, in the heavy-traffic limiting regime, where $\lm_s = s\mu - \Theta(\sqrt{s})$.
It is proved in \cite{Gamarnik_JSQ} that, under JSQ, only a negligible proportion (which converges to $0$) of the customers encounter a queue upon arrival, and those customers that have to wait encounter only
one customer in queue. Thus, asymptotically, no queue is larger than $2$. (This result holds only after some transient period, because the initial condition may have many larger queues.)
On the other hand, \cite{Gamarnik_supermarket} proves that, in the supermarket model with $d > 1$, the fraction of queues that are of order $\log_d \sqrt{s}$ approaches $1$ as $s \tinf$.

To conclude, the dimensionality of the queue process, and the fact that it is not reversible, render exact analysis of parallel-server systems intractable, even
under Markovian assumptions. Other than stability results and stochastic domination, as in \eqref{eq:ineqPW},
little can be said about the systems' dynamics and steady state distributions. Nevertheless,
the aforementioned asymptotic results suggest that JSQ is substantially more efficient than
PW($d$) for $d < s$, which, in turn, is substantially more efficient than uniform routing, namely, than PW($1$).

\subsection{Notation}
\label{subsec:not}
We use $\R$ to denote the set of real numbers, with $\R_+ = [0, \infty)$, $\Z_+$ to denote the set of non-negative integers, and $\Z_+^* := \ZZ_+ - \{0\}$ the subset of (strictly) positive integers.
For any $q \in \Z_+$ and all sets $A$, we denote by
$A^q$ the set of vectors of dimension $q$ having elements in $A$, e.g., $\RR^q$ is the set of $q$-dimensional real-valued vectors.
Vectors are in general denoted by bold letters.
For a vector $\bx=(x_1,...,x_q)$ in $\R^q$, we denote by $\ord{\bx}$ the ordered version of $\bx$, i.e. $\ord{\bx} = (x_{(1)}, x_{(2)},\dots, x_{(q)})$ is any permutation of the elements of $\bx$ such that
$x_{(1)} \le x_{(2)} \le \cdots \le x_{(q)}$.
vector $\ord{\bx}$.)
The set of ordered vectors in $A^q$ is denoted by $\ord{A^q}$; for example, $\ord{\R_+^q} := \{\bx \in \RR^s_+ : x_1 \le \cdots \le x_q\}$.

We let $\ba \circ \bx \in \R^q$ denote the Hadamard product of two vectors $\bx=(x_1,...,x_q)$ and $\by=(y_1,...,y_q)$ in $\R^q$, i.e., $\by \circ\bx=(y_1x_1,...,y_q x_q)$.
For $\bx \in \R_+^q$, we define $\pos(\bx)$ to be the number of positive coordinates of $\bx$, which is $0$ if $\bx$ is the zeroth vector $\mathbf 0 := (0,\dots, 0)$.
Let $\llbracket p,q \rrbracket = \Z_+ \cap [p,q]$. For any $i \in \llbracket 1,q \rrbracket$, let $\gre_i$ denote the vector having all coordinates null except the $i$th one, equal
to $1$, and let $\gre$ denote the unit vector whose components are all equal $1$; $\gre := (1,\dots, 1)$.
For any $\bx \in \R_+^q$ we denote by $\parallel \bx \parallel = \sum_{i=1}^q x_i$ and $\parallel \bx \parallel_2 = \sum_{i=1}^q (x_i)^2$.
For any two real numbers $a$ and $b$, let $a \vee b$ and $a \wedge b$ denote the maximum and the minimum of $a$ and $b$, respectively. Let $a^+= a \vee 0$.

\subsection{Organization}
The rest of the paper is organized as follows: We provide a detailed literature review in Section \ref{secLit}.
The model, including the family of allocation policies, which we call $\bp$-allocation policies, is formally introduced in Section \ref{sec:model}.
In Section \ref{sec:suff} we study a class of $\bp$-allocation policies for which
the condition $\rho<1$ implies that the system is stable. The insufficiency of this traffic condition to imply stability in general is demonstrated in Section \ref{secInsufficiency}.
In Section \ref{sec:simu} we present simulation results suggesting that our main results extend to workload-based routing policies.
The proofs of two technical lemmas appear in Appendix \ref{sec:proofs}, and two additional supporting results appear in Appendix \ref{sec:aux}.

\section{Related Literature} \label{secLit}

\paragraph{{\em Non-monotonic parallel queues.}}
Under JSW, the dynamics of the system, as well as the sojourn time of jobs, coincide with those of a single-queue $s$-server system operating under the First In First Out (FIFO) service policy.
In particular, that $\rho < 1$ is a necessary and sufficient condition for the stability of the system under JSW follows from from the basic stability theory of the $GI/GI/s$ queue,
first proved in the seminal paper \cite{KW55}.
The sufficiency of the condition $\rho < 1$ for stability of the $G/G/s$ queue was generalized in \cite{Brandt85} to the stationary ergodic framework, namely,
when both the inter-arrival and service-time sequences are time-stationary and ergodic,
but not necessarily independent; see also \S 2.2 of \cite{BacBre02}. This general result was proved using a backwards scheme
of the Loynes type \cite{Loynes62}, building on the fact that the (random) updating map of the stochastic recursive sequence representing the system is non-decreasing for the coordinate-wise vector ordering.
For the same reason, JSW is the unique routing rule within the class of semi-cyclic policies introduced in \cite{SW03}, which renders the total workload to be a non-decreasing function of $s$ at all times; see \cite{Moy17_b}.
Therefore, the stability region under allocation policies {\em other than JSW} cannot simply be characterized via a Loynes-type construction,
and we must therefore adopt a different approach.

\paragraph{{\em JSQ systems.}}
The JSQ policy was first introduced in \cite{Haight58} for a system with two servers, each having a different service rate.
The first proof that the condition $\rho < 1$ is necessary and sufficient for
a Markovian parallel-server system under JSQ to be stable (admit a steady state) appears in \cite[Theorem 1]{King61} for a system with $s=2$ servers, building on
a straightforward Lyapunov stability argument. The main goal of \cite{King61} is to characterize the stationary distribution of the (stable) system via generating functions.
Explicit computation of this distribution is provided in \cite{FMcK77}. A systems with finite buffers is studied in \cite{DFT17} which provides closed-form expressions for the loss probabilities.
A non-idling version of JSQ was proposed and analyzed in \cite{Lu11} which considers systems with more than one dispatcher, and analyzes how to balance information regarding idle servers among those dispatchers.

There are several papers that study JSQ in asymptotic regimes. In addition to \cite{Gamarnik_JSQ}, which was discussed above, we mention \cite{Gra00}, which
identifies a mean-field limit, and shows the chaoticity of the system as $N$ increases.
An Orstein-Uhlenbeck limit for the same model is obtained in \cite{Gra05}.

In general, Lyapunov-stability arguments, as in \cite{King61}, can be hard to generalize to higher-dimensions, because of the need to control the drifts of the process
at all states outside some compact subset of the state space.
Our proof of Theorem \ref{thMaximal} below, that $\rho < 1$ implies that the system is stable for a certain subset of control parameters, is a generalization
of \cite[Theorem]{King61}, both because it allows any number of servers $s$, and because it considers a larger family of routing policies, for which JSQ is a special case.
In the latter regard, it also generalizes Corollary \ref{coroStable}.
Our proof is achieved by employing a certain partial-order relation (see Definition \ref{defOrder} in Section \ref{subsec:order}) in conjunction with a Lyapunov-stability argument.

\paragraph{{\em Power-of-$d$ allocations.}}
The PW($d$) policy was first studied in \cite{Vved96} and \cite{Mitzenmacher96}, which also coined the term ``supermarket model'' to describe a system operating under this control.
The supermarket model has since received substantial attention due to its practical and theoretical significance.
Both \cite{Gamarnik_supermarket} and \cite{Brightwell12} study the supermarket model in heavy traffic, namely, as the traffic intensity approaches $1$.
The rate at which the equilibrium distribution of a typical queue converges to the limiting one in the total-variation distance is studied in \cite{Luczak07}, which also quantifies the chaotic behavior
of the system, asymptotically, namely, the rate at which the joint distribution of any fixed number of queues converges to the limiting product-form distribution.
Finally, we mention a recent game-theoretic supermarket model in \cite{SupermarketGame}, which is also analyzed asymptotically, as the number of servers and arrival rate increase to infinity.

It is significant that the asymptotic result regarding the doubly exponential decay rate of the queue size in equilibrium does not necessarily hold for general service-time distributions.
Indeed, \cite{Bramson10} shows that, for some power-law service-time distributions, the equilibrium queue sizes decay at an exponential, or even polynomial, rate,
depending on the power-law exponent and the number of sampled queues $d$.

\paragraph{{\em Robustness of Control.}}
The dynamics of a system under a given control are typically studied in idealized settings, which do not fully hold in practice. In particular, even small deviations from the theoretical
implementation of a control (due to, e.g., human or measurement errors, discretization of a continuous control process, delays in making or applying a decision, etc.),
can in turn lead to substantial perturbations from theoretically predicted performance.
Such discrepancies between theory and implementation constitute an important area of research in dynamical control theory (see, e.g., \cite[\S 14]{khalil02} and \cite{Liberzon03}),
but received little attention in the queueing literature.
In \cite{PWchatter} it is shown how the implementation of a control, that has theoretically desirable performance in a certain asymptotic regime, can lead
to chattering of the queue process and, in turn, to {\em congestion collapse}, namely, to a severe overload that is solely due to the implementation of the control.
We refer to \cite[Section 9]{PWchatter} for a detailed, albeit informal, discussion on how small perturbations from idealized control settings can have substantial impacts on the performance
of queueing systems.

\paragraph{{\em Instability of Subcritical Systems.}}
Congestion collapse is related to the more general research area regarding instability of subcritical networks, which initialized with the presentation of
the (deterministic) Lu-Kumar network studied in \cite{LuKumar}, and its stochastic counterpart, the Rybko-Stolyar network \cite{RySto92}; see also \cite[\S]{BramsonBook} and
\cite{MoyalPerry} for applications and literature reviews.
A non-idling policy is considered in \cite{Moy17_a}, in which an arrival is routed to the queue having the $2$nd shortest workload.
A sufficient condition for stability, that is strictly stronger than $\rho < 1$, is provided,
and it is conjectured that the latter condition is also necessary.

In ending we remark that the possibility of experiencing congestion collapse in parallel-server systems can be considered a triviality for vacuous choices of the control.
For example, if the arrival rate $\lm$ is larger than the service rate $\mu$ (but is smaller than $s\mu$), then the policy that routes all arrival to the same server is clearly unstable.
Here, however, we perform a refined analysis of the (in)stability region for the non-idling version of JSQ when routing errors occur with a nonnegligible probability.

\vspace{.5cm}
\section{The Model} \label{sec:model}

We consider the following class of parallel systems: There are $s$ servers, each having its own infinite buffer for waiting jobs.
Jobs arrive to the system following an homogeneous Poisson process with intensity $\lm$, and join one of the servers according to a routing policy from a class of policies that will be formally defined immediately.
If the server to which a job is routed is idle, that job enters service immediately;
otherwise, it joins the end of the server's dedicated queue, waiting for its turn to be served (there is no jockeying between queues).
All jobs are statistically equivalent, requiring
i.i.d.\ service times that are exponentially distributed with mean $1/\mu$, regardless of the server.
We let $\rho := \lm/(s\mu)$ denote the traffic intensity to the system.

Even though this routing mechanism is an erroneous execution of JSQ, we treat it as a control,
which we call a ``$\bp$-allocation policy'', where $\bp$ is the {\em allocation probability vector} $\bp=(p_1,p_2,...,p_s)$.
With this view, the PW($d$), and in particular, JSQ and uniform splitting,
become special cases of the $\bp$-allocation policy; see \eqref{eq:pU}-\eqref{eq:pPWd} below.

The class of allocation policies we consider depend only on the queue sizes (number of customers in service plus the number of customers waiting in line) of the servers.
To determine the server allocations without ambiguity, we assume that the servers
are re-labeled as $1,2,...,s$ upon each event (arrival or departure), such that $i < j$ if the queue size for server $i$ is no larger than the queue for server $j$.
Servers having the same queue size have consecutive labels; the labeling within each such group of servers can be arbitrary, but for concreteness, we assume that it is made uniformly at random.
Therefore, with $Q_i(t)$ denoting the queue size of server $i$ at time $t \ge 0$,
the vector $Q(t):=\left(Q_1(t),...,Q_s(t)\right)$ is an element of $\ord{\Z_+^s}$.
We let $Q_{\Sigma}(t)=\sum_{i=1}^d Q_i(t)$ denote the total number of customers in the system at time $t$.

Let $\Pi^s$ denote the family of probability vectors on $[0,1]^s$, namely, a vector $\bp := (p_1, \dots, p_s)$ is in $\Pi^s$ if $p_i \in [0,1]$, $1 \le i \le s$, and $\sum_{i=1}^s p_i = 1$.
\begin{definition}
We call a routing policy a {\bf $\bp$-allocation policy}, and call $\bp$ the {\bf allocation (probability) vector}, $\bp \in \Pi^s$,
if, upon arrival, a customer is sent to server $i$ with probability $p_i$, independently of everything else.
A $\bp$-allocation policy is said to be {\em non-idling} if an incoming job is routed to an idle server, whenever there is one upon that job's arrival,
and is otherwise routed to server $i$ with probability $p_i$, independently of everything else.
\end{definition}
In particular, for each $\bp$-allocation policy there is a corresponding non-idling version which uses the same allocation vector to route jobs that arrive when all servers are busy,
and otherwise route the arrivals to one of the idle servers.

Observe that if two or more queues are equal upon an arrival, a $\bp$-allocation policy
assigns the incoming customer to one of those queues with an equal probability. Indeed, if a customer enters the system at $t$ and the consecutive indices $j,j+1,...,k-1,k$ are such that $Q_{j-1}(t^-)<Q_j(t^-)=Q_{j+1}(t^-)=....Q_{k-1}(t-)=Q_{k}(t-)<Q_{k+1}(t-)$, 
then by uniformity of the choice of labeling, server $\ell$ is chosen with the probability
$${1 \over k-j+1}\sum_{i=j}^k p_i, \quad \text{for any $\ell \in \llbracket j,k \rrbracket$.}$$

A particular class of $\bp$-allocation policies is the PW($d$) policy, and its special cases, uniform splitting and JSQ.
\begin{itemize}
\item For uniform splitting, the allocation vector is
\begin{equation}
\label{eq:pU}
\bp^{(1)}:=\left(1/s,...,1/s\right).
\end{equation}
\item For JSQ, we have
\begin{equation}
\label{eq:pJSQ}
\bp^{(s)}:=(1,0,...,0).
\end{equation} 
\item More generally, under PW($d$) an arriving job is routed to server $i$ if it is one of the $d$ draws,
and the other $d-1$ servers drawn have indices in $\llbracket i+1,d \rrbracket$.
Then the allocation vector for this policy is (with ties broken uniformly at random)
\begin{equation}
\label{eq:pPWd}
\bp^{(d)}:=\left(p^{(d)}_1,...,p^{(d)}_s\right)
=\begin{cases}
p^{(d)}_i ={s-i \choose d-1} / {s \choose d},&\quad i\in\{1,...,s-d+1\};\\
p^{(d)}_i =0,&\quad i\in \{s-d+2,\dots,s\},
\end{cases}\end{equation}
\end{itemize}
Observe that \eqref{eq:pU} and \eqref{eq:pJSQ} are consistent with \eqref{eq:pPWd}, and are achieved by taking $d=1$ and $d=s$, respectively.

\subsection{The Stability Regions of the Allocation Policies}
It is immediate that for any probability vector $\bp \in \Pi^s$, the process $Q$ is an $\ord{\Z_+^s}$-valued continuous-time Markov chain (CTMC).
The {\em stability region} of the parallel-server system corresponding to the $\bp$-allocation policy, which we denote by $\mathcal S(\bp)$,
is then defined as the set of values of the traffic intensity $\rho = \lm/(s\mu)$ under which $Q$
is stable in the sense that it is a positive recurrent CTMC. Then for any $\bp$-allocation vector we define
\begin{align*}
\mathcal S(\bp) &:= \left\{\rho \in [0,1)\,:\, \mbox{$Q$ is positive recurrent under the $\bp$-allocation policy}\right\};\\
\mathcal S^\ni(\bp) &:= \left\{\rho \in [0,1)\,:\, \mbox{$Q$ is positive recurrent under the {\bf non-idling} $\bp$-allocation policy}\right\}.
\end{align*}

It is intuitively clear that the stability region under a non-idling $\bp$-allocation policy cannot be smaller than the stability region under the same allocation vector when the policy is not non-idling.
This is formally proved in the next proposition.
\begin{proposition}
\label{prop:busyvsidling}
$\maS(\bp) \subseteq \maS^\ni(\bp)$ for all $\bp \in \Pi^s$.
\end{proposition}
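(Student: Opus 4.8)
The plan is to construct an explicit coupling of the two systems, driven by a common arrival-and-routing stream and by synchronized service completions, and to show that under this coupling the total number of jobs in the non-idling system is dominated, sample path by sample path, by the total in the plain system. Denote by $Q$ the sorted queue vector under the $\bp$-allocation policy and by $Q^{\ni}$ the one under its non-idling counterpart, both started empty. I would feed both systems the same Poisson arrival process, and at each arrival use one common uniform variable $U$ to select a rank $i=i(U)\in\{1,\dots,s\}$ (the block $[\sum_{j<i}p_j,\sum_{j\le i}p_j)$ containing $U$). The plain system routes the arrival to its own $i$-th shortest server; the non-idling system routes it to an idle server if one is present (its shortest slot), and otherwise to its own $i$-th shortest server. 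Service is coupled by attaching one rate-$\mu$ clock to each sorted slot $k\in\{1,\dots,s\}$ and, when that clock rings, letting each system complete a job at its own slot-$k$ server whenever that server is busy; since all busy servers are exchangeable and memoryless, this reproduces the correct generator of each sorted process while tying their departures together.

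The core of the argument is a joint invariant. For a vector $\bx\in\Z_+^s$ write $T_k(\bx)=\sum_{j=s-k+1}^{s}x_{(j)}$ for the sum of its $k$ largest coordinates. I claim the coupling preserves $T_k(Q^{\ni}(t))\le T_k(Q(t))$ for every $k\in\{1,\dots,s\}$ and all $t\ge 0$; the case $k=s$ is exactly $Q^{\ni}_{\Sigma}(t)\le Q_{\Sigma}(t)$. This relation says that $Q^{\ni}$ is simultaneously smaller in total and more balanced than $Q$ (a weak-majorization/increasing-convex domination), which is precisely the effect of forcing arrivals onto idle servers. The verification proceeds by induction over the jump epochs. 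At an arrival the incremented coordinate of $Q^{\ni}$ sits no higher in the sorted order than that of $Q$ (it is the shortest slot whenever an idle server exists, and the common rank $i(U)$ otherwise), so each $T_k$ can only gain relative to $Q$ in a way consistent with the inequalities. At a slot-$k$ service the key point is that whenever some partial-sum inequality is tight, $Q^{\ni}$ carries at least as many busy servers as $Q$: at equal totals the domination $T_\bullet(Q^{\ni})\le T_\bullet(Q)$ is genuine majorization, and since $n^+(\cdot)$ is Schur-concave this forces $n^+(Q^{\ni})\ge n^+(Q)$, so that every plain departure at a tight level is matched by a non-idling departure.

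Granting the invariant, sample-path total domination $Q^{\ni}_{\Sigma}(t)\le Q_{\Sigma}(t)$ holds for all $t$ from the empty start. Both processes are irreducible CTMCs on $\ord{\Z_+^s}$ with uniformly bounded transition rates (at most $\lm+s\mu$, so no explosion), and the empty state $\mazero$ is reachable for each. Positive recurrence of an irreducible CTMC is equivalent to finiteness of the expected return time to $\mazero$; since the two systems leave $\mazero$ together at the first arrival and $Q^{\ni}_{\Sigma}$ hits $0$ no later than $Q_{\Sigma}$, the non-idling return time to $\mazero$ is pathwise bounded by the plain one. Hence $\rho\in\maS(\bp)$, i.e.\ finiteness of the plain return time, forces finiteness of the non-idling return time, giving $\rho\in\maS^{\ni}(\bp)$ and therefore $\maS(\bp)\subseteq\maS^{\ni}(\bp)$.

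The main obstacle is the service step of the invariant's verification. Because servers are relabeled by queue length at every event, a completion in one system can reshuffle the sorted order differently than in the other, and one must rule out the scenario in which the plain system sheds a job from a ``top'' server at a level where the corresponding non-idling partial sum is tight but its slot-$k$ server is strictly shorter. Controlling this requires the balance half of the invariant, $n^+(Q^{\ni})\ge n^+(Q)$ at tight levels, to interlock correctly with the total-domination half. It is precisely this coupling between the two halves \emph{together} --- rather than either inequality alone, each of which can momentarily fail in isolation under the relabeling --- that makes the partial-sum formulation, and not a naive coordinatewise or busy-server comparison, the right invariant to carry through the induction.
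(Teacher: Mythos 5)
Your strategy is genuinely different from the paper's, and considerably heavier. The paper does not attempt any pathwise comparison between $Q$ and $Q^{\ni}$: it observes that the non-idling chain has the same local transition rates as the (assumed ergodic) chain $Q$ on the set $F=\{\bx : x_i\ge 1 \mbox{ for all } i\}$ where all servers are busy, and the same local rates as the JSQ chain $Q^{(s)}$ (ergodic for every $\rho<1$, by \eqref{eq:ineqPW}) on $F^c$, and concludes ergodicity of $Q^{\ni}$ by patching these two ergodic local dynamics together. Your plan instead aims at the stronger statement that $Q^{\ni}_{\Sigma}(t)\le Q_{\Sigma}(t)$ pathwise under a common driving sequence; granted that, the final reduction you give is fine (the coupling preserves marginals, the chains are non-explosive, and a pathwise bound on the return time to $\mathbf 0$ transfers finiteness of its expectation), so the inclusion of stability regions would indeed follow.

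The gap is that the heart of your argument --- preservation of the invariant $T_k(Q^{\ni}(t))\le T_k(Q(t))$ for every $k$ --- is asserted rather than proved at precisely the step you yourself flag as the main obstacle. The arrival step is only sketched (``each $T_k$ can only gain \dots in a way consistent with the inequalities''), and for the service step the only case you actually argue is tightness at level $k=s$, where equal totals plus the remaining inequalities give genuine majorization and Schur-concavity of the number of busy servers. The dangerous scenarios, however, occur at tight levels $k<s$: a slot-$k_0$ departure with $k_0\ge s-k+1$ removes a unit from $T_k(Q)$ whenever $Q_{(k_0)}>Q_{(s-k)}$, yet may leave $T_k(Q^{\ni})$ unchanged when $Q^{\ni}_{(k_0)}=Q^{\ni}_{(s-k)}$ (the decrement is absorbed by re-sorting), or when $Q^{\ni}_{(k_0)}=0<Q_{(k_0)}$; ruling these out at a tight level $k$ requires playing the inequalities at neighbouring levels against each other, case by case. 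That verification is exactly the combinatorial content of a Turner-type coupling (cf.\ Theorem 4 of \cite{Turner98}) and is the entire substance of the claim; as written, your proof records that the two halves of the invariant must ``interlock correctly'' without showing that they do. The architecture is sound and I believe the invariant is true, but the proof is incomplete at its crux --- and for this particular proposition the full majorization machinery is unnecessary, since the local-dynamics observation above already yields the inclusion.
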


\begin{proof}
Consider an allocation vector $\bp$ together with an arrival rate $\lm$ and service rate $\mu$, such that $\rho \in \maS(\bp)$,
and the corresponding queue process $Q$. Observe that the traffic intensity $\rho$ is then necessarily less than 1. Denote by $Q^\ni$ the queue process in the system operating under the corresponding non-idling $\bp$-allocation policy,
and by $Q^{(s)}$ the queue process of a system of same traffic load, operating under the JSQ policy (equivalently, under the PW($s$) policy).
It is easily seen that the process $Q$ coincides in distribution with the process $Q^{\ni}$ on the subset $F := \{\bx \in \RR^s : x_i \ge 1, i \in \llbracket 1, s \rrbracket\}$ of the state space,
and with $Q^{(s)}$ on the complement subset $F^c := \{\bx \in \RR^s : \bx \notin F\}$.
The result follows from the fact that the process $Q^\ni$ is ergodic by assumption, together with the fact that the process $Q^{(s)}$ is ergodic for any $\rho < 1$ due to \eqref{eq:ineqPW}.
(Recall that $Q_\Sigma^{(1)}$ in \eqref{eq:ineqPW} is the queue under uniform splitting, which operates like $s$ independent $M/M/1$ queues, each with traffic intensity $\lm/\mu < 1$.)
\end{proof}

\begin{remark}
It is significant that $\maS(\bp) \ne \maS^\ni(\bp)$ in general; in particular, there exist $\bp$-allocation policies for which $\maS^\ni(\bp)$ is strictly larger than $\maS(\bp)$.
To see why the proof of Proposition \ref{prop:busyvsidling} cannot be adapted to show the containment in the other direction
(i.e., to show that $\maS^\ni(\bp) \subseteq \maS(\bp)$), consider a $\rho$ for which $Q^\ni$ is stable under some $\bp$-allocation policy.
Note that, if $Q$ is not known to be an ergodic CTMC at the outset, then there is no guarantee that the expected hitting time of the set $F$ by the process $Q$ is finite,
or even that this hitting time is finite w.p.1. Therefore, even though the expected hitting time of $F^c$ by $Q$ is finite, because $Q^\ni$ is assumed
to be ergodic and $Q$ is locally distributed the same as $Q^\ni$ while in $F$, it is possible that the process $Q$ is absorbed in $F^c$.
\end{remark}

As an immediate consequence of Proposition \ref{prop:busyvsidling} we see that, if stability is proved for given system's parameters and for a specific $\bp$-allocation policy (a specific allocation vector $\bp$),
then the system is also stable under the non-idling version of that policy.
On the other hand, a system is unstable if operated under a $\bp$-allocation policy, if it is shown to be unstable under its non-idling version.

\vspace{.5cm}
\section{Maximal $\bp$-Allocation Policies} \label{sec:suff}
In this section we identify a sub-class of $\bp$-allocation policies under which the stability region is the interval $[0,1)$.
We call such an allocation policy {\em maximal}, since its stability region is the largest possible.

\subsection{Preliminary}
\label{subsec:order}
We will state a sufficient condition on the $\bp$-allocation probability that ensures that the system is stable if $\rho < 1$.
That condition is expressed in terms of the following partial order on $\R^s_+$.
\begin{definition} \label{defOrder}
Let $\ba = (a_1,...,a_s)$ and $\bb=(b_1,...,b_s)$ be elements of $\R_+^s$, $s\ge 1$.
We say that $\ba$ is smaller than $\bb$ in the ``generalized Schur-convex'' order, and write $\ba \preceq_{\textsc{gsc}} \bb$, if
\[\sum_{i=k}^s a_i \le \sum_{i=k}^s b_i\mbox{ for all }k \le s.\]
\end{definition}
The relation ``$\preceq_{\textsc{gsc}}$" defines a partial ordering on $\R_+^s$ that is a variant (for non-necessarily ordered vectors)
of the partial semi-ordering ``$\prec_{\textsc{cx}}$" introduced in Definition 3 of \cite{Moy17_b}, which itself generalizes the well-known Schur-convex partial semi-ordering ``$\prec_{\textsc{scx}}$"
(see e.g. \cite{MO79}) to vectors of different total sums. Specifically, we have $\ba \preceq_{\textsc{gsc}} \bb$ if and only if $\ba \prec_{\textsc{cx}} \bb$ for any $\ba,\bb \in \ord{\R_+^s}$,
and $\ba \preceq_{\textsc{gsc}} \bb$ if and only if $\ba \prec_{\textsc{scx}} \bb$ for any $\ba,\bb \in \ord{\R_+^s}$ such that $\parallel \ba \parallel = \parallel \bb \parallel$.

Observe that, for any random variables $X$ and $Y$ having respective probability mass functions $\bp_X$ and $\bp_Y$ in $\Pi^s$ and values in
$\llbracket 1,s \rrbracket $, it holds that
$X \le_{st} Y$ if and only if $\bp_X \preceq_{\textsc{gsc}} \bp_Y.$
The following monotonicity result is proved in appendix,
\begin{lemma}
\label{lemma:order}
Let $\ba$ and $\bb$ be two vectors in $\R_+^s$ such that $\ba \preceq_{\textsc{gsc}} \bb$, and let $\bx \in \ord{\R_+^s}$.
Then
$$\bx \circ \ba \preceq_{\textsc{gsc}} \bx \circ \bb.$$
\end{lemma}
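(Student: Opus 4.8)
The plan is to reduce the claim to an elementary summation-by-parts (Abel) identity. Writing $c_i := b_i - a_i$ for $1 \le i \le s$, the hypothesis $\ba \gsc \bb$ says precisely that the tail sums $C_k := \sum_{i=k}^s c_i$ satisfy $C_k \ge 0$ for every $k \le s$; I also set $C_{s+1} := 0$ by convention. Since $(\bx \circ \bb)_i - (\bx \circ \ba)_i = x_i c_i$, establishing $\bx \circ \ba \gsc \bx \circ \bb$ amounts to showing that $\sum_{i=k}^s x_i c_i \ge 0$ for each $k \le s$.

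Next I would rewrite each increment as $c_i = C_i - C_{i+1}$ and apply Abel summation to the tail $\sum_{i=k}^s x_i c_i$. Shifting the index in the term involving $C_{i+1}$ and using $C_{s+1} = 0$, the sum telescopes to
\[
\sum_{i=k}^s x_i c_i = x_k C_k + \sum_{i=k+1}^s (x_i - x_{i-1}) C_i.
\]

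Finally I would read off nonnegativity term by term. Because $\bx \in \ord{\R_+^s}$, we have $x_k \ge 0$ and $x_i - x_{i-1} \ge 0$ for every $i \ge k+1$, while $C_i \ge 0$ for all $i$ by the hypothesis $\ba \gsc \bb$; hence every summand on the right-hand side is nonnegative, giving $\sum_{i=k}^s x_i c_i \ge 0$. As $k \le s$ was arbitrary, this is exactly the assertion $\bx \circ \ba \gsc \bx \circ \bb$. I do not expect a genuine obstacle beyond careful index bookkeeping in the Abel summation; the monotonicity and nonnegativity of the ordered vector $\bx$ are precisely what keep each telescoped term of the correct sign, which is why the hypothesis $\bx \in \ord{\R_+^s}$ (rather than an arbitrary $\bx \in \R_+^s$) is essential.
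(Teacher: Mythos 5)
Your proof is correct and is essentially the same argument as the paper's: both rest on the Abel summation identity $\sum_{i=k}^s x_i c_i = x_k C_k + \sum_{i=k+1}^s (x_i - x_{i-1}) C_i$, with nonnegativity of each term coming from the tail-sum hypothesis and the monotonicity of $\bx$. The only cosmetic difference is that you apply summation by parts to the difference vector $\bb - \ba$, whereas the paper applies it to $\ba$ and $\bb$ separately and then compares the two resulting expressions.
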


\subsection{A Sufficient Condition for Stability}\label{subsec:maximal}
The main result of this section shows that if, in addition to $\rho < 1$, it holds that the $\bp$-allocation probability vector is no
larger, in the $\gsc$ order, than the uniform probability on $\llbracket 1,s \rrbracket$, namely, if $\bp\in\Pi^s$ satisfies
\bequ \label{Ncond}
\bp \gsc \bp^{(1)},
\eeq
for $\bp^{(1)}$ in (\ref{eq:pU}), then the system is stable.
\begin{theorem}\label{thMaximal}
If $\bp$ satisfies \eqref{Ncond}, then $\maS(\bp) = [0,1)$, namely, the $\bp$-allocation policy is maximal.
\end{theorem}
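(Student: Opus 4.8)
The plan is to prove positive recurrence for every $\rho\in[0,1)$ by a Foster--Lyapunov drift argument, using the quadratic Lyapunov function $V(\bx):=\parallel\bx\parallel_2=\sum_{i=1}^s x_i^2$. Since $V$ is a symmetric function of the coordinates, its value is unaffected by the relabelling of servers that takes place after each event, so I can compute the drift directly on $\ord{\Z_+^s}$ without tracking the reordering. The choice of a quadratic (rather than linear) Lyapunov function is essential: under a linear $V=\parallel\bx\parallel$ every arrival contributes the same increment $+1$ regardless of the target queue, so the drift would not register the allocation vector $\bp$ at all; the quadratic function, whose arrival increment at ordered position $i$ equals $2x_i+1$, is precisely what makes routing to short queues advantageous in the drift.

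Next I would compute the generator $\mathcal{L}$ acting on $V$. An arrival (rate $\lm$) routed to ordered position $i$ changes $V$ by $2x_i+1$, while a departure from a busy server at position $i$ (rate $\mu$, possible only when $x_i\ge 1$) changes $V$ by $-2x_i+1$. Summing these contributions and using $\sum_{i=1}^s p_i=1$ gives
\[
\mathcal{L}V(\bx)=2\lm\sum_{i=1}^s p_i x_i-2\mu\sum_{i=1}^s x_i+\lm+\mu\, n^+(\bx),
\]
where $n^+(\bx)\le s$ is the number of busy servers. The additive term $\lm+\mu\, n^+(\bx)\le \lm+s\mu$ is bounded, so everything hinges on controlling $\sum_{i=1}^s p_i x_i$.

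The crux is the inequality $\sum_{i=1}^s p_i x_i\le \tfrac1s\sum_{i=1}^s x_i$, valid for every ordered $\bx\in\ord{\R_+^s}$ under the hypothesis $\bp\gsc\bp^{(1)}$. This is exactly where the generalized Schur-convex condition enters: since $\bx$ is ordered and $\bp\preceq_{\textsc{gsc}}\bp^{(1)}$, Lemma \ref{lemma:order} gives $\bx\circ\bp\preceq_{\textsc{gsc}}\bx\circ\bp^{(1)}$, and reading off the $k=1$ inequality in Definition \ref{defOrder} yields $\sum_{i=1}^s x_i p_i\le\sum_{i=1}^s x_i/s$. Intuitively, $\bp\gsc\bp^{(1)}$ means the chosen ordered position is stochastically no larger than under uniform routing, so short queues are favored at least as much as in uniform splitting. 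I regard this as the heart of the proof; the remaining steps are routine.

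Finally, substituting the inequality gives
\[
\mathcal{L}V(\bx)\le\Big(\tfrac{2\lm}{s}-2\mu\Big)\sum_{i=1}^s x_i+\lm+s\mu=-2\mu(1-\rho)\parallel\bx\parallel+\lm+s\mu .
\]
Because $\rho<1$, the right-hand side tends to $-\infty$ as $\parallel\bx\parallel\to\infty$; hence there exist a finite set $C\subset\ord{\Z_+^s}$ and $\varepsilon>0$ with $\mathcal{L}V\le-\varepsilon$ off $C$ and $\mathcal{L}V<\infty$ on $C$. Since $Q$ is irreducible and has uniformly bounded transition rates (at most $\lm+s\mu$), hence is non-explosive, the Foster--Lyapunov criterion implies that $Q$ is positive recurrent. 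As this holds for arbitrary $\rho\in[0,1)$, we conclude that $\maS(\bp)=[0,1)$, i.e. the $\bp$-allocation policy is maximal.
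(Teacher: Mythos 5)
Your proof is correct and follows essentially the same route as the paper: the quadratic Lyapunov function $V(\bx)=\parallel\bx\parallel_2$, the key inequality $\sum_{i=1}^s p_i x_i\le \tfrac1s\sum_{i=1}^s x_i$ obtained from Lemma \ref{lemma:order} applied with $\ba=\bp$, $\bb=\bp^{(1)}$, and a Foster--Lyapunov conclusion. The only (immaterial) difference is that you work with the continuous-time generator directly, whereas the paper computes the drift of the embedded jump chain (your expression divided by the total rate $\lm+\pos(\bx)\mu$) and then transfers positive recurrence back to the CTMC via the bounded holding rates.
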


\begin{proof}
For $n \ge 0$, let $T_n$ denote the $n$th transition epoch of the CTMC $Q$, with $T_0 = 0$, and consider the embedded discrete-time Markov chain (DTMC) $\{Q_n : n \ge 0\}$ defined via
$Q_n := Q\left(T_n\right)$. We prove the result via a Lyapunov stability argument, employing the Lyapunov function $V: \ord{\Z_+^s} \longrightarrow \R+$ defined by $V(x) = \| \bx \|_2$.
Let
\[\mathcal K = \left\{\bx \in \ord{\Z_+^s}\,:\, \sum_{i=1}^s x_i \le {s(\lambda + s\mu) \over 2(s\mu-\lambda)} \right\}.\]

Then, for any $n \ge 1$ and
$\bx=(x_1,...,x_s) \in \mathcal K^c \cap \ord{\Z_+^s}$ we have
\begin{multline} \label{eqLyp1}
\esp{V\left(Q_{n+1}\right) - V\left(Q_n\right) \mid Q_n = \bx}\\
\begin{aligned}
&= \sum_{i=1}^s {\lambda \over \lambda + \pos(\bx)\mu}p_i\left((x_i + 1)^2 - (x_i)^2\right)
+ \sum_{i=1}^s {\mu \over \lambda +\pos(\bx)\mu}\left(((x_i - 1)^+)^2 - (x_i)^2\right)\\
&= {1 \over \lambda + \pos(\bx)\mu}\left(2\left(\lambda\sum_{i=1}^s p_ix_i-\mu\sum_{i=1}^s x_i\right) + \lambda +\pos(\bx)\mu \right).
\end{aligned}\end{multline}
Applying Lemma \ref{lemma:order} with $\ba := \bp$, $\bb := \bp^{(1)}$, for $\bp^{(1)}$ in (\ref{eq:pU}), and the ordered vector $\bx$, we obtain that $\bx \circ\bp \preceq_{\textsc{gsc}} \bx\circ \bp^{(1)}$,
and in particular that $\sum_{i=1}^s p_ix_i \le {1 \over s}\sum_{i=1}^s x_i.$ As $\pos(\bx) \le s$, this entails that the last expression in \eqref{eqLyp1} is less than or equal to
\[{1 \over \lambda + \pos(\bx)\mu}\left(2\left({\lambda \over s}-\mu\right)\sum_{i=1}^s x_i + \lambda +s\mu \right),\]
which is strictly negative for $\bx \notin \mathcal K$.
In particular, for all $\bx=(x_1,...,x_s) \in \mathcal K^c \cap \ord{\Z_+^s}$ and all $n$,
\[\esp{V\left(Q_{n+1}\right) - V\left(Q_n\right) \mid Q_n = \bx}<0.\]
We deduce from the Lyapunov-Foster Theorem (see, e.g., \cite[\S 5.1]{Bre99}) that the DTMC $\{Q_n : n \ge 1\}$ is positive recurrent. In turn, this implies that the CTMC $Q$ is positive recurrent as well,
by Theorem 6.18 in \cite{Kulkarni17}, as the rate of the exponentially distributed holding time in each of the states is bounded from below by $\lm$.
\end{proof}

As discussed in Section \ref{secLit}, the maximality of PW($d$) follows from \eqref{eq:ineqPW} which is proved via coupling arguments.
Theorem \ref{thMaximal} provides an independent proof of this result.

\begin{corollary}
\label{cor:JSQPowerofd}
JSQ, uniform splitting, and PW($d$), $d \ge 2$, are maximal allocation policies.
\end{corollary}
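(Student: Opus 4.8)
The plan is to derive Corollary~\ref{cor:JSQPowerofd} as a direct consequence of Theorem~\ref{thMaximal} by verifying that each of the three named policies satisfies the sufficient condition \eqref{Ncond}, namely $\bp \gsc \bp^{(1)}$. Since Theorem~\ref{thMaximal} already establishes that condition \eqref{Ncond} implies maximality (i.e., $\maS(\bp) = [0,1)$), the entire task reduces to a verification of the $\gsc$-ordering inequality for the three allocation vectors \eqref{eq:pU}, \eqref{eq:pJSQ}, and \eqref{eq:pPWd}. Unwinding Definition~\ref{defOrder}, I need to show, for each policy, that the tail sums satisfy $\sum_{i=k}^s p_i \le \sum_{i=k}^s p^{(1)}_i = (s-k+1)/s$ for every $k \le s$.

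For uniform splitting the claim is trivial, since $\bp^{(1)} \gsc \bp^{(1)}$ holds with equality in every tail sum. For JSQ, where $\bp^{(s)} = (1,0,\dots,0)$, the tail sum $\sum_{i=k}^s p^{(s)}_i$ equals $1$ when $k=1$ and equals $0$ for all $k \ge 2$; in both cases this is bounded above by $(s-k+1)/s$, so the ordering holds. The only substantive case is the general PW($d$) policy for $d \ge 2$. Here I would exploit the probabilistic interpretation noted just before Lemma~\ref{lemma:order}: since each allocation vector in $\Pi^s$ is the probability mass function of a random variable on $\llbracket 1, s\rrbracket$, the relation $\bp \gsc \bp^{(1)}$ is equivalent to the stochastic-order statement that the index of the chosen server under PW($d$) is stochastically smaller than a uniformly chosen index. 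Intuitively this is clear, because PW($d$) biases the choice toward the smaller (shorter) indices, whereas uniform routing spreads mass evenly; but I would want to make the tail-sum comparison explicit.

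Concretely, for PW($d$) the tail sum is $\sum_{i=k}^s p^{(d)}_i$, which by the combinatorial formula in \eqref{eq:pPWd} equals the probability that the minimum of the $d$ sampled indices is at least $k$, i.e. that all $d$ draws land in $\llbracket k, s \rrbracket$. Sampling without replacement from $\{1,\dots,s\}$, this probability is $\binom{s-k+1}{d}/\binom{s}{d}$ for $k \le s-d+1$ and $0$ otherwise. I then need the elementary inequality
\begin{equation*}
\frac{\binom{s-k+1}{d}}{\binom{s}{d}} \le \frac{s-k+1}{s},
\end{equation*}
which holds because $\binom{s-k+1}{d}/\binom{s}{d}$ telescopes into a product of $d$ factors of the form $(s-k+1-j)/(s-j)$ for $j = 0,\dots,d-1$, each of which is at most $(s-k+1)/s$ (the factors decrease in $j$ since $s-k+1 \le s$). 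Keeping just the $j=0$ factor and bounding the rest by $1$ gives the claim, with strict inequality once $d \ge 2$ and $k \ge 2$.

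The main obstacle is essentially expository rather than mathematical: the whole corollary is a one-line deduction from Theorem~\ref{thMaximal} once \eqref{Ncond} is checked, so the only real content is the combinatorial tail-sum inequality for PW($d$), and even that is a routine estimate once the tail sum is recognized as the probability that the sampled minimum exceeds $k-1$. I would be careful to handle the boundary cases $k > s-d+1$ (where the PW($d$) tail sum vanishes) separately, and to record that uniform splitting is the boundary case $d=1$ achieving equality throughout, which is why it lies exactly on the edge of the maximal region rather than strictly inside it.
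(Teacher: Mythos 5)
Your proposal is correct and follows the paper's strategy exactly in outline: reduce the corollary to verifying condition \eqref{Ncond} for each allocation vector, dispose of uniform splitting and JSQ trivially, and identify the PW($d$) tail sum $\sum_{i=k}^s p^{(d)}_i$ with the probability $\binom{s-k+1}{d}/\binom{s}{d}$ that all $d$ sampled indices lie in $\llbracket k,s\rrbracket$ (zero for $k>s-d+1$), then invoke Theorem \ref{thMaximal}. The only divergence is in how the final inequality is established. You bound the ratio directly: writing $\binom{s-k+1}{d}/\binom{s}{d}=\prod_{j=0}^{d-1}(s-k+1-j)/(s-j)$, you keep the $j=0$ factor, which equals $(s-k+1)/s$, and bound the remaining factors by $1$, giving $\bp^{(d)} \gsc \bp^{(1)}$ in one step. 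The paper instead proves $\bp^{(d)} \gsc \bp^{(2)}$ by a ratio comparison and then checks $\bp^{(2)} \gsc \bp^{(1)}$ by an explicit summation, concluding by transitivity of $\gsc$. Your one-step product bound is valid (each factor $(s-k+1-j)/(s-j)\le 1$ since $k\ge 1$) and is arguably the more economical route, since it bypasses the intermediate comparison with $\bp^{(2)}$ entirely; the paper's detour through $\bp^{(2)}$ buys nothing beyond recording that intermediate ordering. Both arguments deliver the same conclusion with comparable effort, and your handling of the boundary cases $k>s-d+1$ is appropriate.
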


\begin{proof}
Recall (\ref{eq:pU}), (\ref{eq:pJSQ}) and (\ref{eq:pPWd}).
As $\bp^{(s)} \preceq_{\textsc{gsc}} \bp^{(1)}$ (and  $\bp^{(1)} \preceq_{\textsc{gsc}} \bp^{(1)}$ by definition),
both the JSQ and uniform splitting policies satisfy the assumptions of Theorem \ref{thMaximal}.

To prove the statement for PW$(d)$ policies, $d \in \llbracket 2,s-1 \rrbracket$, fix such $d$ and observe that, for any $k \le s-d+1$,
the quantity $\sum_{i=k}^s p^{(d)}_i$ is the probability that the
$d$ uniformly drawn servers have indices in $\llbracket k,s \rrbracket$, which is equal to ${s-k+1 \choose d} /{s \choose d}.$
From this, we deduce that
\begin{equation}
\label{eq:compared2}
\bp^{(d)} \preceq_{\textsc{gsc}} \bp^{(2)}.
\end{equation}
Indeed, for any $k\ge s-d+2$ we have $\sum_{i=k}^s p^{(d)}_i =0$, whereas for any $k \le s-d+1$, we have that
\[{\sum_{i=k}^s p^{(d)}_i \over \sum_{i=k}^s p^{(2)}_i} = {{s-k+1 \choose d}{s \choose 2} \over {s \choose d}{s-k+1 \choose 2} } = {(s-d)...(s-d-k+2) \over (s-2)...(s-2-k+2)}  \le 1,\]
whence (\ref{eq:compared2}).
Now, $\sum_{i=s}^s p^{(2)}_i = 0$ and for all $k\le s-1$, so that
\[\sum_{i=k}^s p^{(2)}_i = {1 \over {s \choose 2}} \sum_{i=k}^{s} (s-i) = {s-k \over s-1}{s-k+1 \over s} \le {s-k+1 \over s} = \sum_{i=k}^s{1\over s},\]
implying that $\bp^{(2)} \preceq_{\textsc{gsc}} \bp^{(1)}$. This, together with (\ref{eq:compared2}) and the transitivity of ``$\preceq_{\textsc{gsc}}$", shows that
$\bp^{(d)} \preceq_{\textsc{gsc}} \bp^{(1)}$. Thus, PW($d$) is maximal by Theorem \ref{thMaximal}. 
\end{proof}

Theorem \ref{thJ2SQp} and Proposition \ref{prop:busyvsidling} also imply
\begin{corollary}
\label{cor:maximalnonidling}
$\maS^\ni(\bp) = [0,1)$ for any $\bp$ satisfying (\ref{Ncond}). 
In particular, the non-idling versions of uniform splitting and PW($d$) allocation policies are maximal.
\end{corollary}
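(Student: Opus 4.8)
The plan is to obtain this corollary by directly chaining the two principal results already in hand, namely Theorem \ref{thMaximal} and Proposition \ref{prop:busyvsidling}, since the statement carries no new analytical content beyond them. First I would invoke Theorem \ref{thMaximal}: for any $\bp \in \Pi^s$ satisfying \eqref{Ncond}, the idling $\bp$-allocation policy is maximal, i.e. $\maS(\bp) = [0,1)$. Next I would apply Proposition \ref{prop:busyvsidling}, which furnishes the inclusion $\maS(\bp) \subseteq \maS^\ni(\bp)$. Combining these gives the chain $[0,1) = \maS(\bp) \subseteq \maS^\ni(\bp)$, and since $\maS^\ni(\bp) \subseteq [0,1)$ holds by the very definition of the non-idling stability region as a subset of $[0,1)$, I would conclude $\maS^\ni(\bp) = [0,1)$, which is exactly maximality of the non-idling policy.

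For the second assertion, I would simply specialize to the allocation vectors already treated in Corollary \ref{cor:JSQPowerofd}. That corollary establishes that $\bp^{(1)}$ (uniform splitting) and $\bp^{(d)}$ for $d \ge 2$ (the power-of-$d$ vectors) all satisfy $\bp \gsc \bp^{(1)}$, i.e. \eqref{Ncond}; indeed $\bp^{(1)} \gsc \bp^{(1)}$ holds trivially and $\bp^{(d)} \gsc \bp^{(1)}$ was shown there via \eqref{eq:compared2} and transitivity of ``$\gsc$''. Applying the first part of the present corollary to each of these vectors then immediately yields that the non-idling versions of uniform splitting and of PW($d$) are maximal.

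There is essentially no obstacle to overcome: the argument is a one-line set inclusion resting entirely on the previously proved Theorem \ref{thMaximal} and Proposition \ref{prop:busyvsidling}. The only point deserving a moment's care is to state explicitly that $\maS^\ni(\bp)$ is by construction contained in $[0,1)$, so that the single containment supplied by Proposition \ref{prop:busyvsidling} \emph{upgrades} to an equality rather than merely bounding $\maS^\ni(\bp)$ from below. I would also emphasize that no reverse inclusion $\maS^\ni(\bp) \subseteq \maS(\bp)$ is needed here, consistent with the Remark following Proposition \ref{prop:busyvsidling}, which cautions that such a converse can genuinely fail; the proof uses only the one inclusion already available.
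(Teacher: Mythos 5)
Your proof is correct and matches the paper's intended argument: the corollary is stated there as an immediate consequence of Theorem \ref{thMaximal} and Proposition \ref{prop:busyvsidling} (the text's citation of Theorem \ref{thJ2SQp} at that point appears to be a typo for Theorem \ref{thMaximal}), which is exactly the chain $[0,1) = \maS(\bp) \subseteq \maS^\ni(\bp) \subseteq [0,1)$ you give. The specialization to uniform splitting and PW($d$) via the verification of condition \eqref{Ncond} in Corollary \ref{cor:JSQPowerofd} is likewise what the paper intends.
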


\vspace{.5cm}
\section{Insufficiency of the Condition $\rho < 1$}
\label{secInsufficiency}
Theorem \ref{thMaximal} requires, in addition to the usual traffic condition $\rho < 1$, that the allocation probability $\bp$ is smaller, in the generalized Schur convex order,
than the uniform probability distribution on $\llbracket 1,s \rrbracket$.
We now demonstrate that the latter condition is not futile, and that the traffic condition by itself does not imply stability of a system.
To provide simple counter-examples, we consider $\bp_{p,2}$-allocation probabilities, with $\bp_{p,2} := \left(1-p,p,0,...0\right)$, for $0<p<1$. In other words, any arrival is
routed to the shortest queue with probability $q := 1-p$,
or to the second-shortest queue with probability $p$ (ties broken by a uniform draw from the relevant queues.)
We interpret $p$ as the probability that the controller (or the arriving customer) is making an error in distinguishing between the shortest and the second shortest queue.
We denote this $\bp_{p,2}$-allocation policy by J2SQ$(p)$, and its corresponding non-idling version by J2SQ$^\ni(p)$.

Under the non-idling version of the latter policy, the controller identifies idle servers, but otherwise has a probability $p$ of making an error by sending an arrival to the second-shortest queue.
Thus, when all the servers are busy, errors are made according to a Bernoulli trial with a probability $p$ of ``success.''
Observe that, for $p^{(1)}$ in \eqref{eq:pU},
\bequ \label{suff2}
\bp_{p,2} \preceq_{\textsc{gsc}} \bp^{(1)} \qiffq p \le 1-1/s.
\eeq

For a given number of servers $s \ge 1$ and an error probability $p > 0$, let 
\begin{equation}
\label{eq:defVcr}
\Vcr(p) := {s-1 \over 2 s} \left(1+\sqrt{1+{4 \over p(s-1)}}\right).
\end{equation}
We refer to $\Vcr(p)$ as the {\em critical value} (for stability; see Theorem \ref{thJ2SQp} below).
Simple algebra shows that
\bequ \label{Vcr-cond}
\Vcr(p) < 1 \qiffq p>1-1/s.
\eeq
\begin{theorem}\label{thJ2SQp}
$\maS^\ni(\bp_{p,2}) \subset [0, \Vcr(p)\wedge 1)$ for any $p\in [0,1]$. 
\end{theorem}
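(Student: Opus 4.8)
The plan is to argue by contradiction: suppose that, for some $\rho \in [\Vcr(p)\wedge 1,\,1)$, the CTMC $Q$ under the non-idling policy J2SQ$^\ni(p)$ is positive recurrent, and derive a contradiction. By \eqref{Vcr-cond} there is nothing to prove unless $\Vcr(p)<1$, i.e. unless $p>1-1/s$; so assume this, fix $\rho\in[\Vcr(p),1)$, and write $\pi$ for the stationary distribution. Set $a:=\lm/\mu=s\rho$ and $b:=\lm(1-p)/\mu=a(1-p)$, and note that $p>1-1/s$ together with $\rho<1$ force $b=a(1-p)<s\cdot(1/s)=1$. The destabilizing mechanism is that $\bp_{p,2}=(1-p,p,0,\dots,0)$ feeds only the two shortest queues, so the shortest queue drains frequently and forces idleness; the crux is to quantify this idleness in stationarity.

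First, throughput balance in stationarity gives $\mu\,\mathbb E_\pi[\pos(Q)]=\lm$, i.e. the mean number of busy servers equals $a$, so the mean number of idle servers is $\mathbb E_\pi[s-\pos(Q)]=s-a$. Since there is at least one idle server exactly when the shortest queue is empty, $s-\pos(Q)\ge \ind_{\{Q_1=0\}}$, whence
\[ s-a \;=\; \mathbb E_\pi[s-\pos(Q)] \;\ge\; \P_\pi\!\left[Q_1=0\right]. \]
The heart of the proof is the matching lower bound $\P_\pi[Q_1=0]\ge\pi_0$, where $\pi_0:=(1-b)/(1-b+a)$. Granting this, we obtain $s-a\ge\pi_0$; but a direct computation (the same algebra underlying \eqref{eq:defVcr}) shows that $\pi_0>s-a$ is equivalent to $a^2p-(s-1)pa-(s-1)>0$, i.e. to $\rho>\Vcr(p)$. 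For $\rho\in(\Vcr(p),1)$ this contradicts $s-a\ge\pi_0$, ruling out positive recurrence.

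To prove $\P_\pi[Q_1=0]\ge\pi_0$ I would analyze the running minimum $m:=Q_1=\min_j Q_j$ through stationary level-crossing identities, obtained by applying the generator to the indicators $\ind_{\{Q_1\le k\}}$ and integrating against $\pi$. The key observation is that $m$ can increase from level $k$ to $k+1$ only when the minimum is attained by a single server and that server receives an arrival: under J2SQ$^\ni(p)$ this occurs at rate $\lm(1-p)$ when $k\ge1$ (the unique shortest server is label $1$, chosen with probability $1-p$, all servers being busy) and at rate $\lm$ when $k=0$ (one idle server, fed by the non-idling rule). Conversely, $m$ decreases from $k+1$ to $k$ whenever one of the (at least one) servers at the minimum level completes service, at rate at least $\mu$. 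Balancing up- and down-crossings across level $k+\tfrac12$ therefore yields $\mu\,\P_\pi[m=k+1]\le\lm(1-p)\,\P_\pi[m=k]$ for $k\ge1$ and $\mu\,\P_\pi[m=1]\le\lm\,\P_\pi[m=0]$; hence $\P_\pi[m=k]\le a\,b^{\,k-1}\,\P_\pi[m=0]$ for $k\ge1$. Summing the geometric series (here $b<1$ is used) and normalizing gives $1\le\P_\pi[m=0]\,(1+a/(1-b))$, that is $\P_\pi[m=0]\ge\pi_0$, as required.

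The main obstacle is this last step: the running minimum $m$ is not itself a Markov process, since its transition rates depend on the multiplicity of the minimum and on tie-breaking, so the clean birth--death comparison must be replaced by the one-sided crossing inequalities above, and one must verify that arrivals to the unique shortest server indeed occur at the stated rates under the non-idling $\bp_{p,2}$ rule (this is where it is essential that only the two shortest queues are fed). A secondary point is the boundary $\rho=\Vcr(p)$: there $\pi_0=s-a$ and the contradiction is not immediate, but at least one crossing inequality is strict whenever ties or multiple minima occur with positive probability under $\pi$, giving $\P_\pi[m=0]>\pi_0$ and hence $s-a>\pi_0$, a contradiction; alternatively one invokes that $\maS^\ni(\bp_{p,2})$ is a down-closed interval, so that instability throughout $(\Vcr(p),1)$ already forces $\maS^\ni(\bp_{p,2})\subseteq[0,\Vcr(p)\wedge1)$.
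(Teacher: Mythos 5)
Your argument is correct, but it takes a genuinely different route from the paper. The paper's proof is dynamic: it assumes ergodicity, observes that the system must then split infinitely often into a ``front'' $M/M/1$-type server and $s-1$ ``back'' JSQ servers, uses exponential ergodicity of the front birth--death process together with an approximate PASTA argument to show the overflow rate into the back servers eventually exceeds $(s-1)\mu$ when $\rho>\Vcr(p)$, and concludes that a splitting epoch has positive probability of never ending. Your proof instead works entirely with the stationary distribution: the rate-conservation identity $\mu\,\esp{\pos(Q)}=\lm$ gives $\esp{s-\pos(Q)}=s-a$, the cut-balance inequalities for the minimum queue $m=Q_1$ give the geometric bound $\P_\pi[m=k]\le a b^{k-1}\P_\pi[m=0]$ and hence $\P_\pi[m=0]\ge \pi_0=(1-b)/(1-b+a)$, and the algebra $\pi_0>s-a\Leftrightarrow \rho>\Vcr(p)$ yields the contradiction. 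I checked the crossing rates: for $k\ge 1$ the minimum can only rise when it is uniquely attained and the arrival picks server $1$ (probability $1-p$), and ties cannot raise the minimum in one transition, so the inequalities hold as stated; the cut balance is legitimate since the transition rates are uniformly bounded. Your approach is shorter, avoids PASTA and exponential ergodicity entirely, and handles the boundary $\rho=\Vcr(p)$ cleanly (note that the first inequality $s-a\ge\P_\pi[m=0]$ is automatically strict because $\pi(\mathbf 0)>0$ and the empty state has $s\ge 2$ idle servers, which is the simplest way to close your boundary case; the alternative appeal to down-closedness of the stability region is not established in the paper and should be dropped). Two points deserve a sentence of justification in a final write-up: the identity $\mu\,\esp{\pos(Q)}=\lm$ requires an argument (e.g., $\|Q(t)\|/t\to 0$ in probability under positive recurrence, combined with the ergodic theorem for the bounded functional $\pos(Q)$, since $\esp{\|Q\|}<\infty$ is not guaranteed a priori), and the cut-balance identity should be cited or derived from $\pi$-stationarity. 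What the paper's longer route buys in exchange is the structural ``front/back splitting'' picture, which is what generalizes to J$m$SQ in Proposition \ref{prop:JmSQ} and underlies the claim that $\|Q(t)\|\to\infty$ when $\rho$ strictly exceeds the critical value; your argument rules out positive recurrence but does not by itself exhibit the mechanism or give transience.
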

We defer the proof of Theorem \ref{thJ2SQp} to \S \ref{secProof-J2SQp}.
In view of (\ref{suff2}) and \eqref{Vcr-cond}, Theorems \ref{thMaximal} and \ref{thJ2SQp} immediately imply the following,
\begin{corollary} \label{coroJ2SQp}
J2SQ$^\ni(p)$ is maximal if and only if $p \le 1 - 1/s$.
\end{corollary}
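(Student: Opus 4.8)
The plan is to prove the two implications of the equivalence separately, treating Theorems \ref{thMaximal} and \ref{thJ2SQp} as black boxes and combining them with the algebraic equivalences \eqref{suff2} and \eqref{Vcr-cond}. Recall that, by definition, J2SQ$^\ni(p)$ is maximal precisely when $\maS^\ni(\bp_{p,2}) = [0,1)$, so the task reduces to deciding, as a function of $p$, whether this equality holds. Since $\maS^\ni(\bp_{p,2}) \subseteq [0,1)$ always holds by the very definition of the stability region, maximality is equivalent to the reverse containment $[0,1) \subseteq \maS^\ni(\bp_{p,2})$.

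For the sufficiency direction I would argue as follows. Assume $p \le 1 - 1/s$. Then the equivalence \eqref{suff2} gives $\bp_{p,2} \preceq_{\textsc{gsc}} \bp^{(1)}$, that is, the allocation vector $\bp_{p,2}$ satisfies condition \eqref{Ncond}. Corollary \ref{cor:maximalnonidling} then applies verbatim and yields $\maS^\ni(\bp_{p,2}) = [0,1)$, so the policy is maximal. (Equivalently, one may combine Theorem \ref{thMaximal}, which gives $\maS(\bp_{p,2}) = [0,1)$, with Proposition \ref{prop:busyvsidling}, which gives $\maS(\bp_{p,2}) \subseteq \maS^\ni(\bp_{p,2})$; the reverse containment noted above then forces equality.)

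For the necessity direction I would argue by contraposition: assume $p > 1 - 1/s$ and show that the policy is not maximal. The equivalence \eqref{Vcr-cond} gives $\Vcr(p) < 1$, hence $\Vcr(p) \wedge 1 = \Vcr(p)$. Theorem \ref{thJ2SQp} then yields $\maS^\ni(\bp_{p,2}) \subseteq [0, \Vcr(p)) \subsetneq [0,1)$, so $\maS^\ni(\bp_{p,2}) \ne [0,1)$ and the policy fails to be maximal. Combining the two directions gives the stated equivalence.

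The proof carries no genuine analytic difficulty, since the substantive content is packaged in Theorems \ref{thMaximal} and \ref{thJ2SQp}; the only points requiring care are bookkeeping ones, and these constitute the ``main obstacle.'' First, one must check that the two threshold characterizations are genuinely complementary, namely that \eqref{suff2} governs the range $p \le 1-1/s$ and \eqref{Vcr-cond} governs $p > 1-1/s$, so that every $p \in [0,1]$ is covered by exactly one of the two arguments; the boundary value $p = 1-1/s$ is handled by the sufficiency side, where \eqref{suff2} holds with equality while $\Vcr(p) \ge 1$, consistently with Theorem \ref{thJ2SQp} imposing no effective constraint there. Second, the necessity direction relies on the containment of Theorem \ref{thJ2SQp} being into $[0,\Vcr(p))$ with $\Vcr(p)$ \emph{strictly} below $1$, which is exactly what \eqref{Vcr-cond} guarantees and what upgrades the mere containment into strict non-maximality.
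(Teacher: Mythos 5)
Your proof is correct and follows essentially the same route as the paper, which derives the corollary directly from Theorems \ref{thMaximal} and \ref{thJ2SQp} combined with the equivalences \eqref{suff2} and \eqref{Vcr-cond}. Your explicit treatment of the boundary case $p = 1-1/s$ and of why the strict inequality $\Vcr(p)<1$ is what delivers non-maximality is a careful spelling-out of details the paper leaves implicit, but it is not a different argument.
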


In view of Proposition \ref{prop:busyvsidling}, Corollary \ref{coroJ2SQp} implies that the stability region under the $\bp_{p,2}$-allocation policy is also characterized by the value of $p$.
\begin{corollary} \label{coroJ2SQid}
$\maS(\bp_{p,2}) \subseteq [0, \Vcr(p)\wedge 1)$ for all $p \in [0, 1]$. In particular J2SQ$(p)$ is maximal if and only if $p \le 1 - 1/s$.
\end{corollary}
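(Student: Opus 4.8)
The plan is to obtain this corollary as a direct combination of results already established, with no new drift computation or Lyapunov argument required. The three ingredients are Proposition \ref{prop:busyvsidling} (the idling stability region is contained in the non-idling one), Theorem \ref{thJ2SQp} (the non-idling upper bound $[0,\Vcr(p)\wedge 1)$), and Theorem \ref{thMaximal} together with the two elementary equivalences \eqref{suff2} and \eqref{Vcr-cond}.

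First I would prove the set containment $\maS(\bp_{p,2}) \subseteq [0, \Vcr(p)\wedge 1)$ by simply chaining inclusions. Proposition \ref{prop:busyvsidling}, applied to $\bp = \bp_{p,2}$, gives $\maS(\bp_{p,2}) \subseteq \maS^\ni(\bp_{p,2})$, and Theorem \ref{thJ2SQp} gives $\maS^\ni(\bp_{p,2}) \subset [0, \Vcr(p)\wedge 1)$. Transitivity of set inclusion then yields the claim for every $p \in [0,1]$.

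Next I would establish the maximality equivalence in both directions. For sufficiency ($\Leftarrow$), assume $p \le 1 - 1/s$; by \eqref{suff2} this is equivalent to $\bp_{p,2} \gsc \bp^{(1)}$, so $\bp_{p,2}$ meets condition \eqref{Ncond}, and Theorem \ref{thMaximal} gives $\maS(\bp_{p,2}) = [0,1)$, i.e., J2SQ$(p)$ is maximal. For necessity ($\Rightarrow$) I would argue by contraposition: assume $p > 1 - 1/s$; by \eqref{Vcr-cond} this gives $\Vcr(p) < 1$, so $\Vcr(p)\wedge 1 = \Vcr(p)$, and the containment from the first part yields
\[
\maS(\bp_{p,2}) \subseteq [0, \Vcr(p)) \subsetneq [0,1),
\]
whence J2SQ$(p)$ is not maximal.

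There is essentially no obstacle here: the substantive work lies in Theorem \ref{thJ2SQp} (deferred to \S\ref{secProof-J2SQp}) and in Theorem \ref{thMaximal}. The only point meriting a line of care is the necessity direction, where one must confirm that the strict bound $\Vcr(p)$ genuinely excludes a nonempty subinterval of $[0,1)$; this is exactly what \eqref{Vcr-cond} provides, since $p > 1 - 1/s$ forces $\Vcr(p) < 1$, making $[\Vcr(p),1)$ a nonempty interval disjoint from $\maS(\bp_{p,2})$.
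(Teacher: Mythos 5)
Your proposal is correct and follows the paper's own route exactly: the containment is obtained by chaining Proposition \ref{prop:busyvsidling} with Theorem \ref{thJ2SQp}, and the maximality equivalence follows from Theorem \ref{thMaximal} via \eqref{suff2} in one direction and from \eqref{Vcr-cond} together with the containment in the other. No gaps; the paper treats this as an immediate consequence in just the way you describe.
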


\subsection{Join the $2$nd Shortest Queue Allocation Policy}
\label{subsec:J2SQ}
The proof of Theorem \ref{thJ2SQp} involves some technical details that obscure the main intuition for the instability whenever the error probability $p$
is greater than $1-1/s$.
Simplicity is achieved by consider the special case $p=1$, which is tantamount to having the allocation vector be $\bp_{1,2} := (0,1,0,...,0)$.
In this case, the routing policy is simply {\em join the second shortest queue}, which we denote by J2SQ; we denote its non-idling version by J2SQ$^\ni$.
It follows from \eqref{Vcr-cond} that $\Vcr(1)$, defined in (\ref{eq:defVcr}) with $p=1$, satisfies $\Vcr(1) < 1$.

\begin{proposition} \label{prop:J2SQ}
$\maS^\ni(\bp_{1,2}) \subset [0, \Vcr (1))$. 
In particular, J2SQ$^\ni$ is non-maximal.
\end{proposition}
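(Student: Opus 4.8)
The plan is to prove instability whenever $\rho \ge \Vcr(1)$ by exhibiting a norm-like Lyapunov function whose drift is bounded below by a positive constant outside a finite set; this is a converse to the Foster--Lyapunov argument of Theorem \ref{thMaximal}. Recall that the state is always kept sorted, $\bx=(x_1,\dots,x_s)\in\ord{\Z_+^s}$ with $x_1\le\cdots\le x_s$, so $x_1$ is the shortest queue, and that under J2SQ$^\ni$ (the case $p=1$, $\bp_{1,2}=(0,1,0,\dots,0)$) an arrival is sent to an idle server whenever one exists and otherwise (all $x_i\ge1$) to coordinate $2$. The test function I use is linear,
\[
V(\bx):=\sum_{i=1}^s x_i-c\,x_1,\qquad c\in(0,1),
\]
with $c$ to be fixed. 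Since $x_1\le \tfrac1s\sum_i x_i$, we have $(1-\tfrac cs)\sum_i x_i\le V(\bx)\le \sum_i x_i$, so $V\ge0$ is norm-like, and every transition changes $\sum_i x_i$ by $\pm1$ and $x_1$ by at most $1$, so $V$ has uniformly bounded increments.

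The heart of the argument is the computation of the generator drift $\maL V(\bx)=\sum_{\by}q(\bx,\by)\bigl(V(\by)-V(\bx)\bigr)$, which I organize by the number $k:=s-\pos(\bx)$ of idle servers and which turns out to be a state-independent constant in each regime. Writing $v:=\lm/\mu=s\rho$: when all servers are busy ($k=0$, strict minimum), the arrival raises coordinate $2$ while leaving $x_1$ fixed, the departure from the shortest server changes $V$ by $-1+c$, and each of the other $s-1$ departures changes $V$ by $-1$, so
\[
\maL V(\bx)=\lm+c\mu-s\mu=\mu\,(v+c-s).
\]
When exactly one server is idle ($k=1$), the arrival fills it and raises $x_1$ from $0$ to $1$ (change $1-c$), while each of the $s-1$ busy servers contributes $-1$, so
\[
\maL V(\bx)=\lm(1-c)-(s-1)\mu=\mu\bigl(v(1-c)-(s-1)\bigr).
\]
When $k\ge2$ servers are idle, $x_1$ stays at $0$ under both arrivals and departures, so $\maL V(\bx)=\lm-(s-k)\mu\ge\mu\,(v-s+2)$. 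Configurations with ties at the minimum only increase these values (in the all-busy case each extra server tied at the minimum adds $+c\mu$, and the other two regimes are unchanged), so the displayed constants are the worst cases.

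Consequently $\maL V$ is bounded below by a strictly positive constant outside a finite set as soon as $c$ can be chosen with $s-v<c<1-(s-1)/v$; the remaining regime $k\ge2$ is then automatic, because $\sqrt{1+4/(s-1)}>1$ forces the positive root $u^\ast:=\tfrac{s-1}{2}\bigl(1+\sqrt{1+4/(s-1)}\bigr)$ of the quadratic below to exceed $s-1>s-2$. The interval for $c$ is nonempty exactly when $s-v<1-(s-1)/v$, i.e. (multiplying by $v>0$) when $v^2-(s-1)v-(s-1)>0$, i.e. when $v>u^\ast$; since $\Vcr(1)=u^\ast/s$ and $v=s\rho$, this is precisely $\rho>\Vcr(1)$.

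For $\rho>\Vcr(1)$, fixing such a $c$ gives $\maL V\ge\ep>0$ outside a finite set; equivalently the embedded chain $\{Q_n\}$ of Theorem \ref{thMaximal} has one-step drift $\maL V(\bx)/(\lm+\pos(\bx)\mu)$ of the same sign and bounded away from $0$, with bounded increments and $V$ norm-like. The standard converse-drift (transience) criterion (see, e.g., \cite{Bre99}) then shows $\{Q_n\}$ is transient, and transience transfers to the CTMC $Q$ since the holding rates lie in $[\lm,\lm+s\mu]$; hence $Q$ is not positive recurrent and $\rho\notin\maS^\ni(\bp_{1,2})$. At the boundary $\rho=\Vcr(1)$ the admissible interval collapses to $c=s-v$, for which the $k=0$ and $k=1$ drifts vanish while the $k\ge2$ drifts stay strictly positive, so $\maL V\ge0$ outside a finite set and the corresponding non-ergodicity criterion again yields that $Q$ is not positive recurrent. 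This proves $\maS^\ni(\bp_{1,2})\subset[0,\Vcr(1))$, and since $\Vcr(1)<1$ by \eqref{Vcr-cond}, J2SQ$^\ni$ is non-maximal. I expect the main obstacle to be not the algebra but the verification that the lower bound on $\maL V$ is genuinely uniform over the entire infinite state space: one must check every configuration of ties at the shortest queue and of multiple idle servers, and confirm that the generic all-busy and one-idle configurations are the binding ones, since it is exactly this pair of constraints that pins down the threshold $\Vcr(1)$; the zero-drift boundary case $\rho=\Vcr(1)$ is a secondary delicate point.
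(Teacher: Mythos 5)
Your drift computations check out: with the state kept sorted, the only case analysis needed is by the number of idle servers and by the multiplicity of the minimum, and your constants $\mu(v+c-s)$ (all busy, strict minimum), $\mu(v(1-c)-(s-1))$ (one idle) and $\lambda-(s-k)\mu$ ($k\ge 2$ idle) are exactly right, with ties at the minimum only adding $+c\mu$ per tied server. The compatibility condition $s-v<c<1-(s-1)/v$ is equivalent to $v^2-(s-1)v-(s-1)>0$, which is precisely $\rho>\Vcr(1)$ from \eqref{eq:defVcr} and \eqref{eq:equivregions}, so your argument is correct and identifies the same threshold — but by a genuinely different route. The paper's proof is structural: it shows that once exactly one server has at most one job, the system splits into a front $M/M/1/0$ loss server whose overflow (of rate $\lambda^2/(\lambda+\mu)$, obtained via the coupling Lemma \ref{lmStat} and PASTA) feeds a back $(s-1)$-server JSQ system, and derives a contradiction with ergodicity from the infinite expected length of such a ``visit.'' Your proof replaces all of that machinery (regeneration, coupling, PASTA) with a single linear Lyapunov function $\sum_i x_i-cx_1$ and the converse Foster criteria; it is shorter, handles the boundary case $\rho=\Vcr(1)$ inside the same framework rather than as a separate null-recurrence remark, and actually yields the stronger conclusion of transience for $\rho>\Vcr(1)$. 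What the paper's approach buys in exchange is the mechanism of instability and its direct extensions to J$m$SQ and to J2SQ$(p)$ with $p<1$ (Proposition \ref{prop:JmSQ} and Theorem \ref{thJ2SQp}), where your two binding regimes would multiply into an $m$-dimensional front system; it is also illuminating that your two active constraints (all busy, one idle) correspond exactly to the paper's ``split'' states, and that the collapse of your interval for $c$ reproduces the balance $\lambda\cdot\lambda/(\lambda+\mu)=(s-1)\mu$. Two small points to tighten: the transience and non-positive-recurrence criteria you invoke require, beyond the drift sign, the uniformly bounded increments and the fact that the sublevel sets of $V$ are finite (both hold here since $V(\bx)\ge(1-c/s)\|\bx\|$ with $c<1$), and you should cite a source that states them in this form (e.g., Fayolle--Malyshev--Menshikov) since \cite{Bre99} is cited in the paper only for Foster's direct criterion; and the passage from the embedded chain to the CTMC goes through exactly as in the paper because the holding rates lie in $[\lambda,\lambda+s\mu]$.
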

\begin{proof}
Let
\bequ \label{setS}
\maA := \{x \in \ZZ_+^s : x_1 \in \{0,1\}, ~ x_i \ge 2, ~ i \in \llbracket 2, s\rrbracket \}, 
\eeq
and note that,
whenever exactly one of the servers has no jobs waiting in queue, the process $Q$ takes values in the set $\maA$, that is, if $Q_i(t) \in \{0,1\}$ for exactly one $i \in \llbracket 1,s\rrbracket$, then $\ord{Q(t)} \in \maA$.

Let $\mathbf s := (0,2,\dots,2) \in \maA$, and for $k = 1,2,\dots$, let $V_k$ denote the event that the $k$th visit of $\ord{Q} := \{\ord{Q(t)} : t \ge 0\}$ to $\maA$ starting at $\mathbf s$ occurs,
where that $k$th visit begins at time $t_k \ge 0$ if $\ord{Q(t_k-)} \ne \mathbf s$ and $\ord{Q(t_k)} = \mathbf s$, and ends when $\ord{Q}$ exists the set $\maA$, namely, at a random time $t_k+T_k$
such that $\ord{Q(t_k+T_k-)} \in S$ and $\ord{Q(t_k+T_k)} \notin \maA$.
We will henceforth refer to such a visit to $\maA$ (which begin at $\mathbf s$) simply as a ``visit'', and to $T_k$ as the length of the $k$th visit.

We prove the result by making the contradictory assumption that $Q$ is positive recurrent, and thus ergodic. Under this ergodicity assumption, $P(V_k) = 1$ for all $k \ge 1$, and
the lengths of the visits $\{T_k : k \ge 1\}$ are IID, by virtue of the strong Markov property, with $P(0 < T_1 < \infty) = 1$ and $E[T_1] < \infty$.
Now, during the $k$th visit, namely, during the intervals $I_k := [t_k, t_k+T_k)$, the ordered queue process $\ord{Q}$ operates as follows:
Any arrival is routed to server $1$, if this server is idle. Otherwise, the arrival is routed to server $2$.
Hence, over each interval $I_k$, we can view server $1$ as a single-server loss system (to which we refer as the ``{front server}''),
with the overflow from this front server constituting the arrival process to a system with $s-1$ homogeneous servers operating under the JSQ routing policy (to which we refer as the ``back servers'').

If the first arrival during the $k$th visit finds the system in state $\mathbf s$, 
then that arrival is routed to server $1$ (which is idle).
Let $A_k$ denote this latter event:
with $a_k$ denoting the time of the first arrival after time $t_k$, $A_k := \{Q(a_k-) =\mathbf s\}$. By the strong Markov property, the events $A_1, A_2, \dots$ are
independent and have the same probability, and it clearly holds that $P(A_1) > 0$.

By Lemma \ref{lmStat} in Section \ref{appendix}, the first arrival to a single-server loss system puts this system in steady state. In particular, on $[a_1,t_1+T_1)$
the instantaneous probability that an arrival finds server $1$ busy, and is therefore ``overflowed'' to the back system, is $\lm/(\lm+\mu)$.
Thus, due to the PASTA (Poisson Arrivals See Time Average) property,
the ``arrival rate'' to the back servers during $[a_1,t_1+T_1)$ is $\af:=\lm^2/(\lm+\mu)$. It follows that the process $\ord{Q_{-1}} := \ord{(Q_2,...,Q_s)}$ coincides in distribution with the ordered
queue-length process of a JSQ system with $s-1$ servers and arrival rate $\af$.

Next, observe that $\Vcr(1) < 1$ by \eqref{Vcr-cond}, and that $\Vcr(1)$ is thus the only positive root of the polynomial
$x \mapsto s^2x^2 -(s-1)sx -(s-1).$
It then readily follows that, for any $\rho > 0$,
\begin{equation}
\label{eq:equivregions}
{(s\rho)^2 \over 1+s\rho} > (s-1) \quad\mbox{ if and only if }\quad \rho > \Vcr(1).
\end{equation}
Therefore, if $\rho=\lambda/s\mu > \Vcr(1)$, then $\alpha > (s-1)\mu$, and so the probability that
the process $\ord{Q_{-1}}$ will never reach a state in which the smallest of the $s-1$ queues is equal to $1$ is strictly positive, implying that $P(T_1 = \infty) > 0$.
If $\af = (s-1)\mu$ (so that $\rho = \Vcr(1)$), then $\ord{Q_{-1}}$ is null recurrent, and the expected time until a state with the smallest queue being $1$ is reached is infinite.
In either case, the expected length of a visit is infinite, namely, $E[I_1] = E[T_1] = \infty$, in contradiction to the assumed ergodicity of $Q$.
\end{proof}

The proof of Proposition \ref{prop:J2SQ} makes the reason for the instability of the system we consider apparent:
Eventually, the system must split into a front loss single-server system whose overflow process constitutes the arrival process to a back $(s-1)$ parallel-server system operating under the JSQ policy.
If the overflow process is larger than the service capacity of the ``back servers'', then the system as a whole is unstable, because the expected time for it to exit this split structure is infinite.
In particular, once the system splits, the expected time until $Q$ reaches states that are not in the set $\maA$ defined in \eqref{setS} is infinite.
In fact, the regenerative structure of $Q$ implies that, if the traffic intensity is {\em strictly larger} than the critical value, i.e., if $\rho > \Vcr(p,s)$,
then $P(T_k = \infty \text{ for some } k \ge 1) = 1$ and $\|Q(t)\| \ra \infty$ w.p.1 as $t\tinf$.

\begin{remark}
\rm
We note that the (in)stability of the back system is solely determined by
the arrival rate to that system and mean service time $\mu$, and is independent of any other distributional assumptions. In particular, it does not rely on the service time distribution.
Furthermore, the blocking probability of a loss system is insensitive to the service-time distribution, so that the overflow rate from the front server {\em at stationarity} is $\af = \lm^2/(\lm+\mu)$
regardless of the assumption that service times are exponentially distributed. Thus, a generalization of Proposition \ref{prop:J2SQ} can be proved for a system with general service time distributions
having a finite mean $\mu$, but further arguments are needed for the step in which PASTA is applied.
\end{remark}

\subsection{Join the $m$-Shortest Queue Allocation Policy}
\label{subsec:JmSQ}
The arguments in the proof of Proposition \ref{prop:J2SQ} can be easily extended to the case in which there are several ``front servers'' instead of just one such server,
a scenario which arises when the $p$-allocation policy follows the ``join the $m$th shortest queue" assignment rule, corresponding to the allocation vector $\bp_{1,m} = (0,...,0,\underbrace{1}_m,0,...,0)$.
Under this allocation policy, which we denote by J$m$SQ, an incoming customer is routed to the $m$th shortest queue ($2 \le m \le s$) with probability $1$.
The non-idling version of this policy is denoted by J$m$SQ$^\ni$.

For $m \in \llbracket 2, s\rrbracket,$ define
\begin{align}
 \label{gSet}
\mathscr G(m) & := \left\{\rho \in (0,1)\,:\,{s\rho\left(s\rho\right)^{m-1} /(m-1)! \over \sum_{i=0}^{m-1}\left(s\rho\right)^{i} / i!} < (s-m+1) \right\}; \\
\Vcr(1,m) & := \sup \, \mathscr G(m). \label{eq:Vcr1m}
\end{align}
Note that the set $\mathscr G(m)$ is not empty, since it contains all the positive numbers that are smaller than $(s-m+1)/s$. In particular, $\Vcr(1,m)$ is finite.
Further, the inequality in the definition of $\mathscr G(m)$ reduces to (\ref{eq:equivregions}) when $m=2$, so that $\Vcr(1,2)\equiv \Vcr(1)$, for $\Vcr(1)$ in (\ref{eq:defVcr}).

\begin{lemma} \label{lemma:regionm}
$\Vcr (1,m) < 1$ for all $m \in \llbracket 2, s\rrbracket$.
\end{lemma}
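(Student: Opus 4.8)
The plan is to reduce the lemma to a single scalar inequality evaluated at $\rho=1$. Writing $x=s\rho$ and
\[
f(x) := \frac{x^m/(m-1)!}{\sum_{i=0}^{m-1} x^i/i!},
\]
the defining condition of $\mathscr G(m)$ in \eqref{gSet} reads exactly $f(s\rho) < s-m+1$, so that $\mathscr G(m) = \{\rho\in(0,1) : f(s\rho) < s-m+1\}$. Since $f$ is a ratio of polynomials whose denominator is strictly positive on $[0,\infty)$, it is continuous there. Consequently, to prove $\Vcr(1,m)=\sup\mathscr G(m) < 1$ it suffices to establish the \emph{strict} inequality $f(s) > s-m+1$: continuity of $f$ at $x=s$ then yields an $\eta>0$ with $f(x) > s-m+1$ for all $x\in(s-\eta,s]$, whence no $\rho\in(1-\eta/s,1)$ belongs to $\mathscr G(m)$, forcing $\sup\mathscr G(m) \le 1-\eta/s < 1$.

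The crux is therefore the inequality $f(s) > s-m+1$, which I would prove by exhibiting $f(s)/s$ as a blocking probability and reading off the complementary quantity as a weighted average. Set $a_i := s^i/i!$ for $0\le i\le m-1$ and $B := a_{m-1}/\sum_{i=0}^{m-1} a_i$, so that $f(s)=sB$; this is precisely the Erlang-B blocking probability of an $M/M/(m-1)/(m-1)$ loss system with offered load $s$, consistent with the splitting picture in \S\ref{subsec:J2SQ}, where $B$ is the stationary overflow probability from the $m-1$ front servers at $\rho=1$. Using $s\,a_i = (i+1)a_{i+1}$, a direct simplification gives
\[
s(1-B) = \frac{\sum_{i=0}^{m-1} i\,a_i}{\sum_{i=0}^{m-1} a_i},
\]
which is a convex combination of the integers $0,1,\dots,m-1$ with strictly positive weight $a_0=1$ placed on $0$. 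Hence $s(1-B) < m-1$, equivalently $sB > s-(m-1)=s-m+1$, i.e.\ $f(s) > s-m+1$, as required.

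The only delicate point is the \emph{strictness}: the non-strict version $f(s)\ge s-m+1$ merely expresses that the carried load of the front loss system cannot exceed its $m-1$ servers, which at $\rho=1$ would leave open the borderline case $\Vcr(1,m)=1$. The weighted-average observation above is what resolves this, since the loss system is idle with positive probability ($a_0>0$), so its expected number of busy servers is strictly below $m-1$; this is exactly the gap that separates $\Vcr(1,m)$ from $1$. I would also remark, although it is not needed for the statement, that computing $f'$ shows the numerator of $f'$ equals $\tfrac{x^{m-1}}{(m-1)!}\bigl(m+\sum_{i=1}^{m-1}\tfrac{(m-i)x^i}{i!}\bigr) > 0$ on $(0,\infty)$; the resulting strict monotonicity of $f$ upgrades the conclusion to $\mathscr G(m)=(0,\Vcr(1,m))$ with $f\bigl(s\,\Vcr(1,m)\bigr)=s-m+1$, the form in which $\Vcr(1,m)$ is used as a genuine stability threshold.
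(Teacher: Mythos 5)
Your proof is correct, but it takes a genuinely different route from the paper's. The paper argues by induction on $m$: writing $\pi_{\rho,m}$ for the Erlang loss probability of the $m-1$ front servers, it uses the recursion $1/\pi_{\rho,m+1} = 1 + m/(s\rho\,\pi_{\rho,m})$ to show that any $\rho\in\mathscr G(m+1)$ with $\rho \ge \tfrac{(s-m)(s+1)}{(s-m+1)s}$ must already lie in $\mathscr G(m)$, whence $\sup\mathscr G(m+1) \le \sup\mathscr G(m) \vee \tfrac{(s-m)(s+1)}{(s-m+1)s} < 1$, the base case $m=2$ being read off the explicit quadratic behind \eqref{Vcr-cond}. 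You instead evaluate the defining inequality directly at $\rho=1$ and show it fails strictly there: in your notation, $f(s)=sB$ and $s(1-B)=\sum_j j\,a_j/\sum_i a_i$ is the carried load, a convex combination of $0,\dots,m-1$ placing positive weight on $0$, hence strictly below $m-1$; continuity of $f$ then excludes a whole left-neighborhood of $\rho=1$ from $\mathscr G(m)$. All steps check out (including the identity $s\,a_i=(i+1)a_{i+1}$ and the reduction of the lemma to $f(s)>s-m+1$). Your route is more elementary and self-contained --- no induction, no appeal to the $m=2$ closed form --- and it makes the probabilistic source of the strict gap transparent (the front loss system idles with positive probability); combined with your monotonicity remark it even yields the sharper structural fact $\mathscr G(m)=(0,\Vcr(1,m))$ with $f(s\,\Vcr(1,m))=s-m+1$, which the paper's argument does not deliver. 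What the paper's induction buys instead is an explicit chain of comparisons between the thresholds for successive $m$, anchored at the closed-form critical value $\Vcr(1)$. Either argument is a valid proof of the lemma.
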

The proof of Lemma \ref{lemma:regionm} appears is the appendix.

Given Lemma \ref{lemma:regionm}, the following result generalizes Proposition \ref{prop:J2SQ}.
\begin{proposition}
\label{prop:JmSQ}
$\maS^\ni(\bp_{1,m}) \subset [0, \Vcr(1,m))$; In particular, J$m$SQ$^\ni$ is non-maximal.
\end{proposition}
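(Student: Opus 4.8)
The plan is to mimic the proof of Proposition \ref{prop:J2SQ} almost verbatim, replacing the single front server by a block of $m-1$ front servers. First I would define the analogue of the set $\maA$ in \eqref{setS}, namely the set of ordered states in which exactly the first $m-1$ servers have short queues (say $x_i \in \{0,1\}$ for $i \in \llbracket 1, m-1\rrbracket$) while the remaining $s-m+1$ servers satisfy $x_i \ge 2$. On this set, the non-idling J$m$SQ policy routes every arrival to an idle front server whenever one exists (because of the non-idling rule), and otherwise, when all $m-1$ front servers are busy, routes to the $m$th shortest queue, which is the shortest of the back servers. Thus the front servers behave as an Erlang loss system with $m-1$ servers, arrival rate $\lm$, and service rate $\mu$, and the overflow from this loss system feeds a back system of $s-m+1$ servers operating under JSQ.

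Next I would reproduce the regenerative argument. Assuming for contradiction that $Q$ is positive recurrent (hence ergodic), I would fix a reference state $\bs \in \maA$ and consider the successive visits of $\ord{Q}$ to $\maA$ beginning at $\bs$, whose lengths $\{T_k\}$ are IID with finite mean under the ergodicity hypothesis. Invoking Lemma \ref{lmStat}, the first arrival puts the $(m-1)$-server front loss system into its stationary (Erlang) distribution, so by PASTA the stationary blocking probability is the Erlang-B formula
\[
B(m-1,\lm/\mu) = {(\lm/\mu)^{m-1}/(m-1)! \over \sum_{i=0}^{m-1}(\lm/\mu)^i/i!}.
\]
The effective arrival rate to the back JSQ system is therefore $\af := \lm\, B(m-1,\lm/\mu)$. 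Writing $\lm = s\mu\rho$, a short computation shows $\af/\mu$ equals the left-hand side of the inequality defining $\mathscr G(m)$ in \eqref{gSet}, so that the back system is stable (i.e., $\af < (s-m+1)\mu$) precisely when $\rho$ lies in the interior region governed by $\Vcr(1,m)$ from \eqref{eq:Vcr1m}.

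The conclusion then follows exactly as in Proposition \ref{prop:J2SQ}: if $\rho > \Vcr(1,m)$, then $\af \ge (s-m+1)\mu$, so the back JSQ system is transient or null recurrent and the expected time for its smallest queue to drop to $1$ (thereby allowing $\ord{Q}$ to exit $\maA$) is infinite, giving $E[T_1] = \infty$ and contradicting ergodicity. By Lemma \ref{lemma:regionm} we have $\Vcr(1,m) < 1$, so this yields a genuine subcritical instability region and proves $\maS^\ni(\bp_{1,m}) \subset [0,\Vcr(1,m))$. The main obstacle, and the only place where real care beyond Proposition \ref{prop:J2SQ} is needed, is justifying that the front block genuinely decouples as an Erlang loss system on each visit: I must verify that while $\ord{Q} \in \maA$ none of the back servers can become a front server and vice versa, so that the identity of the $m-1$ front servers and the loss/overflow structure is stable throughout the interval $I_k$, and that the PASTA-based computation of $\af$ is valid for the $(m-1)$-server Erlang system rather than the single-server case.
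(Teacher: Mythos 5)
Your overall strategy is exactly the paper's: the same set $\maA_m$, the same split into an $(m-1)$-server Erlang loss front block feeding a back JSQ system of $s-m+1$ servers, the same regenerative visit structure started from $\bs = (0,\dots,0,2,\dots,2)$, and the same contradiction via an infinite expected visit length when $\rho \ge \Vcr(1,m)$, with Lemma \ref{lemma:regionm} supplying $\Vcr(1,m)<1$.

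There is one concrete inaccuracy in how you reach stationarity of the front block. You invoke Lemma \ref{lmStat} and assert that ``the first arrival puts the $(m-1)$-server front loss system into its stationary (Erlang) distribution.'' That is true only for $m=2$ (a single-server loss system, which has a two-point state space and couples with its stationary version at the first event). For $m\ge 3$ the $M/M/(m-1)/0$ process does not reach stationarity at the first arrival; the paper instead uses Lemma \ref{lmStat-m}, under which the front process started from $0$ becomes stationary only at the random time $\tau$ at which all $m-1$ front servers are first simultaneously busy. This forces a slight restructuring of the argument: one introduces the events $E_k=\{\tau_k<T_k\}$ (stationarity is reached before the $k$th visit ends), shows $P(E_1)>0$ so that the $E_k$ occur infinitely often unless some visit is infinite, and only then applies PASTA to get the overflow rate $\lm\pi_m(m-1)$ \emph{after} $\tau_k$, concluding $P(T_k=\infty)>0$ on $E_k$. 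You do flag at the end that the PASTA step for the multi-server front block needs care, so the gap is one you are aware of, but as written the claim you rely on is false for $m\ge 3$ and the fix is precisely the $E_k$ device above. The rest of your computation (identifying $\lm B(m-1,\lm/\mu)/\mu$ with the left-hand side of the inequality defining $\mathscr G(m)$) matches the paper.
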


\begin{proof}{Proof of Proposition \ref{prop:JmSQ}}.
Fix $m \in \llbracket 2, s\rrbracket$ and let 
$$\maA_m := \{x \in \ZZ_+^s : x_i \in \{0,1\}, ~ i \in \llbracket 1, m-1\rrbracket, ~ \qandq x_j \ge 2, ~ j \in \llbracket m, s\rrbracket \}.$$
As in the proof of Proposition \ref{prop:J2SQ}, the statistical homogeneity of the $s$ servers implies that any vector $\bx \in \ZZ^s_+$ that has exactly $m-1$ coordinates with values in $\{0,1\}$
can be considered in $\maA_m$ since $\ord{\bx} \in \maA_m$. Further, as long as the system is in $\maA_m$, it is essentially split into two systems: the first $m-1$ servers
operate like an $M/M/(m-1)$ loss system, and the remaining $s-m+1$ servers operate like a parallel system under the JSQ routing policy, whose arrival process
is the overflow from the first $m-1$ ``front servers.''
Let $\bs = \left(\underbrace{0,\dots,0}_{m-1},\underbrace{2,\dots,2}_{s-m+1}\right).$
We say that a {\em visit} begins when the system transitions into state $\bs$, and ends when it exists the set $\maA_m$, namely, when the splitting into a front and back servers ends.

Let $L_m := \{L_m(t) : t \ge 0\}$ denote the number-in-system process in the $M/M/(m-1)$ loss system, and let $L_m(\infty)$ denote a random variable having the stationary distribution of $L$, which we denote by $\pi_m$,
i.e., $\pi_m(j) := P(L_m(\infty) = j)$.
Note that, during a visit, the number of busy servers in the aforementioned $m-1$ front-servers is distributed like $L_m$. By Lemma \ref{lmStat-m} in \S \ref{appendix},
there exists a random time $\tau$, such that $L_m(t) = L_m(\infty)$ for all $t \ge \tau$, and therefore, the number of busy servers among those front servers is also distributed like $L_m(\infty)$
for all $t\ge \tau_k$ on the event $E_k := \{\tau_k < T_k\}$, where $T_k$ denotes the length of the $k$th visit, and $\{\tau_k : k \ge 1\}$ are IID with $\tau_1 \deq \tau$.
By the strong Markov property, all the visits are IID and $P(E_1) > 0$. Therefore, $\{E_k : k \ge 1\}$ must occur infinitely often, unless one of the visits is infinite,
i.e., finitely-many $E_k$'s will occur if and only if $T_k = \infty$, for some $k \ge 1$.

Now, if $E_k$ occurs for the $k$th visit, then the overflow process from the front servers, which is the arrival process into the back servers, has rate
$\lm \pi_m(m-1)$ after time $\tau_k$, due to PASTA. If $\rho \ge \Vcr(1,m)$, then $\lm \pi^m(m-1) \ge \mu(s-m+1)$, i.e. the arrival rate to the ``back servers'' is
larger than the maximum total service rate of those $s-m+1$ servers after time $\tau_k$ as long as the $k$th visit is in process.
Therefore, $P(T_k = \infty) > 0$ on the event $E_k$.
We conclude that
$$P(T_k = \infty \text{ for some } k \ge 1) = 1,$$
so that $Q$ is either transient or null recurrent.
\end{proof}

\subsection{Proof of Theorem \ref{thJ2SQp}} \label{secProof-J2SQp}
The proofs of Propositions \ref{prop:J2SQ} and \ref{prop:JmSQ} build on the fact that each time a splitting of the system occurs, the front ``loss system'' has a positive probability of reaching stationarity in finite time,
after which PASTA is employed to characterize the overflow rate into the ``back servers.''
In the setting of Theorem \ref{thJ2SQp} with $p<1$ the splitting is as follows: There is one ``front server'' and $s-1$ ``back servers'', as in the proof of Proposition \ref{prop:J2SQ}.
However, the front server does not operate as a loss system. Instead, during each ``visit'' (splitting event), the front server operates as an $M/M/1$ queue with an infinite buffer, having a Poisson arrival process with rate $\lm$.
Each arrival to this $M/M/1$ queue enters service if the server is idle, and otherwise joins its queue with probability $p$, and the back servers with probability $1-p$, independently of everything else.
In particular, the arrival process to the $s-1$ back servers constitutes all the arrival who did not join the front server.
For the particular $M/M/1$ queue we obtain during a splitting event, the time to reach stationarity is infinite, so that PASTA cannot be directly employed as in the proofs of Propositions \ref{prop:J2SQ}
and \ref{prop:JmSQ}.

\begin{proof}[Proof of Theorem \ref{thJ2SQp}.]
Consider $p \in (1-1/s,1]$, and fix $\lambda,\mu$ such that $\rho=\lambda/s\mu \in [\Vcr(p,s), 1)$.
Let $Y^\sf(t)\in\Z_+$ be the number of customers in the front server at time $t$, and for $i \in \llbracket 1,s-1 \rrbracket$, let $Y^{\ni}_i(t)$ be the size of the
$i$th queue among the back servers, in the increasing order of queue lengths. It is easily seen that both processes $Y^{\sf}$ and $Y:=\left(Y^{\sf},Y^{\sb}_1,...,Y^{\sb}_{s-1}\right)$ (as functions of $t$) are CTMCs
on $\Z_+$ and $\Z_+^{s-1}$, respectively.
In particular, $Y^{\sf}$ is a Birth and Death (BD) process on $\ZZ_+$ with respective birth and death rates
$\lm$ and $0$ at state $0$, and $\lm(1-p)$ and $\mu$ at all other states. By the assumed values of $p$ and $\rho$, $Y^{\sf}$ is ergodic with stationary distribution
\begin{align*}
\pi^{\sf}(0) &={\mu-\lambda+\lambda p \over \mu +\lambda p};\\
\pi^{\sf}(i) &=\left({\lambda (1-p)\over \mu}\right)^{i-1} {\lambda \over \mu}\pi^{\sf}(0),\,i \ge 2.
\end{align*}
In particular the stationary probability that the front server is busy is
\begin{equation}
\label{eq:defpip}
\pi^{\sf}\left(\ZZ_+^*\right)=1-\pi^{\sf}(0)= {\lambda\over \mu +\lambda p}={s\rho\over 1 + s\rho p}.
\end{equation}
Now, it is well-known that an ergodic BD process with birth and death rates that are uniformly bounded is exponentially ergodic; e.g., see \cite[\S 4]{Tweedie81}.
Then letting $\|\cdot\|_{TV}$ denote the total-variation norm (e.g., see \cite{asmussen}),
\bequ\label{ergodic}
\|P(Y^{\sf}(t) \in \cdot) - \pi(\cdot)\|_{TV} < C_0 e^{-\beta t}, \quad t \ge 0,
\eeq
for some $C_0 \in [0, \infty)$ that depends on the initial condition only, and for some $\beta > 0$ that is independent of the initial condition.

For a given $y \in \ZZ_+$, Let $P^y_t$ denote the one-dimensional marginal distribution of the random variable $Y^{\sf}(t)$ when $Y^{\sf}(0) = y$.
It follows from \eqref{ergodic} that, for any $\ep > 0$, there exists a $T^y_\ep < \infty$ that depends on the initial condition $y$,
such that
\bequ \label{ergodic2}
\|P^y_t - \pi^{\sf}\|_{TV} < \ep \qforallq t > T_\ep.
\eeq
Next, let $\{t_n : n \ge 1\}$ denote the event (arrival) times in the Poisson arrival process to the system, and for $A \subset \ZZ_+$, let

\bes 
P^{\sf}_n(A) := P(Y^{\sf}(t_n-) \in A) = P_{t_n-}(A) ~~\qandq~~ P^{\sf}_\infty(A) := \lim_{n\tinf} P^{\sf}_n(A) .
\ees
From the PASTA property, we know that the above limit $P^{\sf}_\infty$ exists for all $A \subset \ZZ_+$, and that $P^{\sf}_\infty = \pi$.
Thus \eqref{ergodic2} implies that, for any $\ep > 0$ and for any fixed initial condition $y$,
there exists $T^y_\ep$, such that $\|P^{\sf}_n - \pi^{\sf}\|_{TV} < \ep$ for all $n$ such that $t_n \ge T_\ep$.
(The weak convergence to the stationary distribution is equivalent to convergence in total variation since the state space of $T^{\sf}$ is countable.)
In particular, taking $A := \ZZ_+^*$--corresponding to the event that the front server is busy--and $Y^{\sf}(0) = 0$, we have that, for some $T_\ep := T^0_\ep > 0$
\bequ \label{almostPasta}
\left|P^{\sf}_n\left(\ZZ_+^*\right) - \pi^{\sf}\left(\ZZ_+^*\right)\right| < \ep \qforallq n \text{ for which } t_n > T_\ep.
\eeq

Let $N_{\textsc{of}}(a,b]$ denote the overflow process from the front server (which is the arrival process to the back servers) over the time interval $(a,b]$, $0 \le a < b$.
Consider also a sequence of independent Bernoulli random variables $\{B_n : n \ge 1\}$, that are also independent of all other random variables defining the system,
each having ``success'' probability $p$, i.e., $P(B_n = 1) = p$ for all $n \ge 1$.
As in (\ref{eq:equivregions}), one can easily check that $\rho >\Vcr(p,2)$ if and only if
$\lambda p\pi^{\sf}(\ZZ_+^*) > (s-1)\mu$. Take $\ep > 0$ for which $\lm p \left(\pi^{\sf}\left(\ZZ_+^*\right) - \ep\right) > (s-1)\mu$.
Then \eqref{almostPasta} implies that, for $T_\ep$ in \eqref{almostPasta} and for all $t > 0$,
\bequ \label{Pasta}
t^{-1}E\left[N_{\textsc{of}}(T_\ep,T_\ep + t]\right] = t^{-1} E \left[ \sum_{t_n \in (T_\ep, T_\ep+t]}\mathbf 1_{\left\{\{Y^{\sf}(t_n-) \in \ZZ_+^*\} \cap \{B_n = 1\}\right\}} \right]
> \lm p \left(\pi^{\sf}\left(\ZZ_+^*\right) - \ep\right).
\eeq
The rest of the proof is similar to the arguments in the proof of Proposition \ref{prop:J2SQ}:
Taking the (contradictory) assumption that $Q$ is ergodic, a splitting to a forward and backward servers must occur i.o.
Letting a visit begin when, during such a splitting, the front server first reaches the empty state, we have that
the visits are IID, and each lasts for at least $T_\ep$ time units with a strictly positive probability, for any $\ep$ satisfying the inequality in \eqref{Pasta}.
(Note that, since a visit begins at a fixed state, we can choose the same $T_\ep$ in \eqref{almostPasta} for all the visits.)
More specifically, with $I_k$ denoting the time interval during the $k$th visit beginning when the front server is empty and ending when the visit ends,
we have that $P(I_k > T_\ep) > 0$, so that $\{I_k > T_\ep\}$, $k \ge 1$, must occur i.o.
However, since the overflow process from the front server is guaranteed to be larger than the total service rate $\mu(s-1)$ of the back servers after time $T_\ep$,
there is a positive probability that a visit will never end, contradicting the ergodicity assumption. The proposition is proved. 
\end{proof}

\vspace{.5cm}
\section{Simulation Experiments for Workload-Based Allocation Policies} \label{sec:simu}
As discussed in Section \ref{secMotivation}, our results and analyses provide insights for systems operating under allocation policies that are based on the workload (as opposed to the queue length).
Indeed, it is intuitively clear from the proofs of our main results that a system under JSW also experiences random ``splitting'' into forward and backward subsystems,
and that the backward subsystem may be unstable (so that the whole system is unstable) even if $\rho < 1$.
In this section we present simulation experiments to support this intuition.
In fact, the simulations indicate that the bounds we obtained for the stability regions in Theorem \ref{thJ2SQp} and
Propositions \ref{prop:J2SQ} and \ref{prop:JmSQ}, are tight estimates of the stability regions for the corresponding workload-based allocation policies, which are formally defined as follows.

\begin{definition}
For $m \in \llbracket 1,s \rrbracket$ and $p \in [0,1]$, we say that the allocation policy is {\em Join the $m$th shortest workload with probability $p$}, denoted by
JmSW($p$), if each arrival is sent to the queue having the smallest workload with probability $1-p$, and is otherwise sent to the queue with the $m$th smallest workload with probability $p$.

In the non-idling version of JmSW($p$), denoted by JmSW$^{\ni}$($p$), an arrival is sent to an idle server w.p.1, if such a server is available, and is otherwise routed to a server according to JmSW($p$).
\end{definition}

\paragraph{Cases Considered.}
We simulated a system with $4$ servers, each providing exponentially distributed service with mean $1$,
that is operating under J2SW$^\ni$($p$) (join the second-smallest workload with probability $p$), where $p \in \{0.8, 0.9, 1\}$.
In addition, we simulated the system when it is operating under J3SW$^\ni$($1$), namely, $m=3$ and $p=1$.
For each of these four systems we simulated the corresponding embedded DTMC over $10^7$ arrivals for two values of the traffic intensity $\rho$, one that is slightly above, and the other slightly below,
the critical values $\Vcr(p)$ (for J2SW$^\ni$($p$)) and $\Vcr(1,3)$ (for the system under J3SW$^\ni$($1$)).
The critical values are computed via \eqref{eq:defVcr} and \eqref{gSet}-\eqref{eq:Vcr1m}, respectively.
In particular, for each of the four examples we considered a traffic intensity that is larger than the critical value of $\rho$ by $2/10^3 = 0.002$,
and a traffic intensity that is smaller than the corresponding critical value by $0.002$.
We emphasize that the critical values are for the same system operating under J2SQ$^\ni$($p$) and J3SW$^\ni$($1$), and so we do not know whether they are also the critical values for the system
under the simulated scenarios. 

In Figure \ref{Fig:plotJSW4} we show a sample path of the most loaded server (in terms of workload) for each of the six cases considered for the system
under J2SW$^\ni$($p$), namely, two examples, each with a different $\rho$ for each of the three different values of $p$, as described above.
Two sample paths simulated for the system operating under J3SW$^\ni$($1$), one for each value of $\rho$, are shown in Figure \ref{Fig:plotJSW4bis}.

 \begin{figure}
  \includegraphics[scale=0.4]{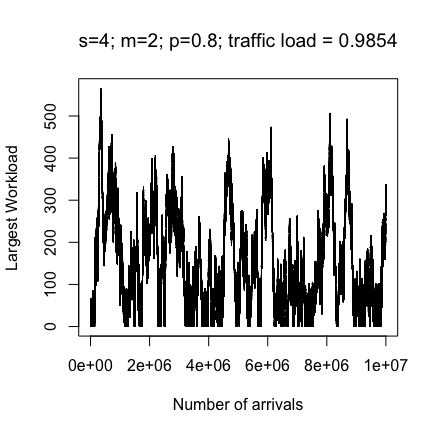}
  \hfill
  \includegraphics[scale=0.4]{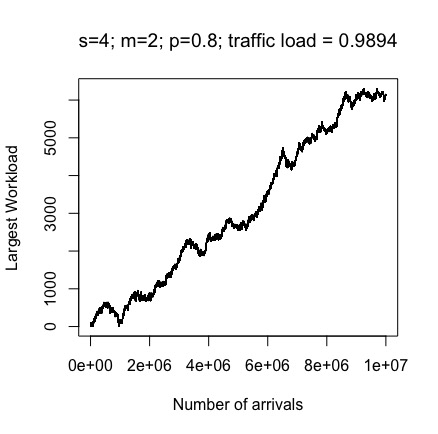}
   \vfill
  \includegraphics[scale=0.4]{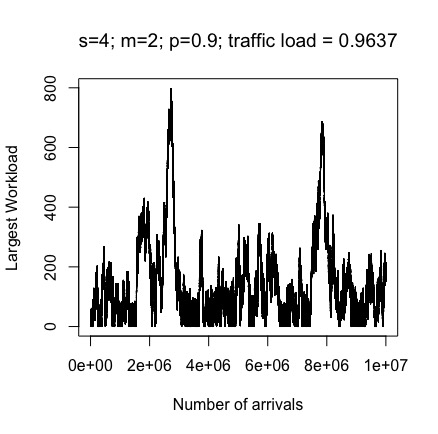}
  \hfill
  \includegraphics[scale=0.4]{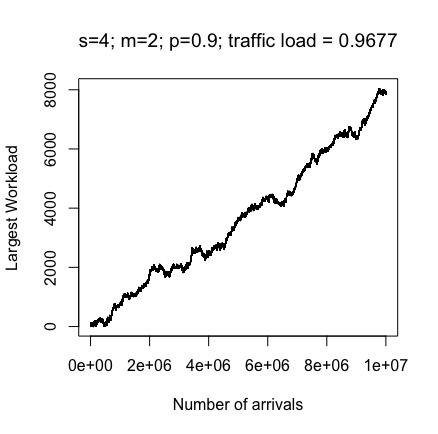}
  \hfill
   \includegraphics[scale=0.4]{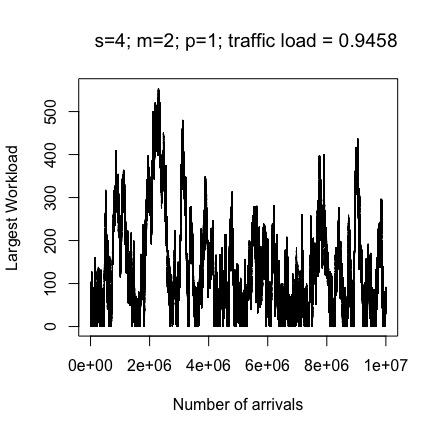}
  \hfill
  \includegraphics[scale=0.4]{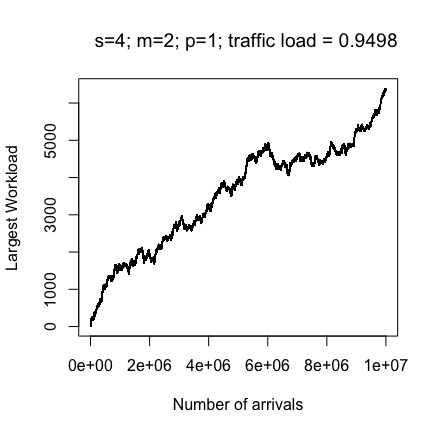}
  \vfill
 \caption{\scriptsize Sample paths of the largest workload process generated for $10^7$ arrivals of a system with four servers operating under J2SW$^\ni$($p$).
 The two figures in each row depict one value of $p$, with the left figure having $\rho = \Vcr(p) + 0.002$, and the right figure having $\rho = \Vcr(p) - 0.002$.
 {{\bf Upper panel:} a system operating under J2SW$^\ni$($0.8$), for which $\Vcr($0.8$) \approx 0.9874$.
 {{\bf Middle panel:} a system operating under J2SW$^\ni$($0.9$), for which $\Vcr(0.9) \approx 0.9657$.
 {\bf Lower panel:} a system operating under J2SW$^\ni$($1$), for which $\Vcr(1) \approx 0.9778$.}
 }
}
  \label{Fig:plotJSW4}
\end{figure}

\begin{figure}
  \includegraphics[scale=0.4]{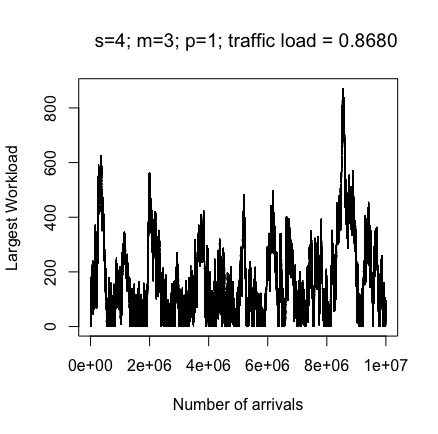}
  \hfill
  \includegraphics[scale=0.4]{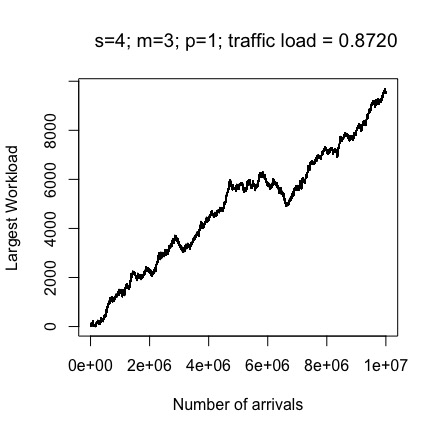}
 \caption{Sample paths of the largest workload process generated for $10^7$ arrivals of a system with four servers operating under J3SW$^\ni$($1$), for which $\Vcr(1,3) = 0.87$.
 The left figure depicts a sample path when $\rho = \Vcr(1,3) - 0.002$, and the right figure depicts a sample path when $\rho = \Vcr(1,3) + 0.02$.
 }
  \label{Fig:plotJSW4bis}
\end{figure}
We remark that, whenever $\rho$ is equal to its critical value, the queue process is null recurrent, and it is therefore hard to determine from simulation whether a system is stable or not when $\rho$
is ``too close'' to its critical value. (For any value of $\rho$ in a small-enough neighborhood of the critical value, the stochastic fluctuations are large, and one may observe a return to the empty state
over any finite time interval, even in the transient case.)
Nevertheless, for each of the four simulated routing policies, the system seems to be unambiguously unstable for the larger value of $\rho$, and to be stable for the smaller value of $\rho$.
This, together with the fact that the difference between the two traffic intensities is just $0.004$, strongly suggest that the critical value of $\rho$ for the system operating under the queue-based allocation policy
is very close, if not equal, to critical value of $\rho$ for the system operating under the corresponding workload-based allocation policy.

\vspace{.5cm}
\section{Summary}
In this paper we considered parallel server systems with $s \ge 1$ statistically homogeneous servers, to which jobs are routed upon arrival according to a family of random-assignment rules,
which we called $\bp$-allocation policies. That family of routing policies includes the PW($d$) routing rule, and the special cases JSQ and uniform routing, as well as their ``non-idling'' versions,
under which an arrival is always routed to an idle server, if there is one.
Our motivation for this study was the fact that in practice, and unlike the ideal settings that are typically considered in the literature, routing errors are likely to occur, so that jobs are not necessarily routed
to the shortest queue when JSQ is implemented.

We first characterized a sufficient condition for stability (in Theorem \ref{thMaximal}) which, in addition to the usual traffic condition $\rho < 1$, requires the $\bp$-allocation vector to be smaller,
in the generalized Schur convex order, defined in Definition \ref{defOrder}, than the uniform distribution on $\llbracket 1,s\rrbracket$. In particular, under the the extra assumption on $\bp$,
the $\bp$-allocation policy (and its non-idling version) is guaranteed to be maximal.

We then demonstrated that the condition $\rho < 1$ by itself does not guarantee that the system is stable, even when a non-idling $\bp$-allocation policy is employed.
Specifically, we considered the stability region of the policy J2SQ$^\ni(p)$, under which arrivals are always routed to an idle server, if one is present,
and are otherwise routed to the shortest queue with probability
$1-p$, and to the second shortest queue with an ``error probability'' $p$. Theorem \ref{thJ2SQp} proves that the stability region may be strictly contained in $[0,1)$,
namely, $\rho$ must be smaller than a positive number $\Vcr$, which is itself smaller than $1$ for a range of values of $p$.
Corollary \ref{coroJ2SQid} proves that $p$ must satisfy $p\le 1-1/s$ in order for J2SQ$^\ni(p)$ to be maximal.

One way of interpreting our results is that the risk of instability caused by erroneous routing decisions is small when the number of servers is large.
On the other hand, routing errors cause any system to effectively be in heavier traffic than planned;
if the system is designed to operate in ``heavy traffic,'' namely, if $\rho \approx 1$, then we can conclude that even a small probability of making routing errors may lead to harmful
departures from the desired performance, and may even lead to instability.

Finally, simulation examples in \S \ref{sec:simu} demonstrate that our results are insightful also for systems operating under JSW,
for which routing errors are more likely to occur, even in automated environments, because the actual workload in each queue can typically only be estimated.
Indeed, we conjecture that the stability regions under JSQ and JSW are the same.

\vspace{.5cm}

\newpage

\begin{appendix}{ }\label{appendix}

\section{Remaining Proofs}
\label{sec:proofs}

\subsection{Proof of Lemma \ref{lemma:order}}
\label{subsec:lemma}
As $\ba \preceq_{\textsc{gsc}} \bb$ and $\bx$ is ordered, for any $k \le s$ we have that
\begin{align*}
\sum_{i=k}^s x_ia_i &= x_ka_k + \sum_{i=k+1}^s \sum_{j=k}^{i-1} (x_{j+1}-x_j) a_i + \sum_{i=k+1}^s x_k a_i\\
                    &= x_k\sum_{i=k}^s a_i + \sum_{i=k+1}^s \left(x_i - x_{i-1}\right)\sum_{j=i}^s a_j
                    \le x_k\sum_{i=k}^s b_i + \sum_{i=k+1}^s \left(x_i - x_{i-1}\right)\sum_{j=i}^s b_j= \sum_{i=k}^s x_ib_i.
\end{align*}


\subsection{Proof of Lemma \ref{lemma:regionm}}
For $m \in \llbracket 2,s \rrbracket$ let $\pi_{\rho, m}$ denote the loss probability of a $M/M/m-1/0$ queue (a loss system with $m-1$ servers), having traffic intensity
$s \rho = \lm/\mu$; then
\[\pi_{\rho,m}:={\left(s\rho\right)^{m-1}/(m-1)! \over \sum_{i=0}^{m-1}\left(s\rho\right)^{i} /i!}.\]
Observe that $\rho \in \mathscr G(m)$, for $\mathscr G(m)$ in (\ref{eq:Vcr1m}), is equivalent to $s\rho\pi_{m}<(s-m+1)$. Also, we clearly have that
\begin{equation}
\label{eq:recregionm}
{1 \over \pi_{\rho,m+1}} = 1 + {m \over s\rho \pi_{\rho,m}},\,m = 2,...,s-1.
\end{equation}
First, $\Vcr(1,2) = \sup \mathscr G(2) < 1$ from (\ref{Vcr-cond}).
We then proceed by induction. Suppose that $\sup \mathscr G(m)<1$ for some $m \in \llbracket 2,s \rrbracket$.
Let $\rho \in \mathscr G(m+1)$. If $\rho \ge {(s-m)(s+1) \over (s-m+1)s}$, then we have that
\[s\rho \pi_{\rho,m+1}   < (s-m) \le s\rho{s-m+1 \over s+1}\]
which, after an immediate computation using (\ref{eq:recregionm}), is equivalent to $s\rho\pi_{\rho,m}<s-m+1$, i.e.
$\rho \in \mathscr G(m)$. By the induction assumption, this implies that
\[\sup \mathscr G(m+1) \le \left( \sup \mathscr G(m) \vee {(s-m)(s+1) \over (s-m+1)s}\right) <1,\]
which concludes the proof.

%
%
%

\section{Auxiliary results}
\label{sec:aux}

\label{subsec:coupling}
Let $L_1 := \{L(t) : t \ge 0\}$ denote the queue process in an $M/M/1/0$ queue (one-server loss system) having a Poisson arrival process with rate $\lm$ and service rate $\mu$.
The proof of the following lemma is a simple application of a standard coupling argument which we bring here for completeness.
\begin{lemma} \label{lmStat}
Consider the process $L_1$, and let $\tau_1$ denote the time of the first event after time $0$ (arrival or departure). Then $L_1$ is stationary for all $t \ge \tau_1$; in particular,
$P(L_1(t)) = 0) = 1 - P(L_1(t) = 0) = \mu/(\lm+\mu)$, $t \ge \tau_1$.
\end{lemma}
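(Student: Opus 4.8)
The plan is to prove Lemma \ref{lmStat} via a coupling argument that forces the process $L_1$ to coincide with a stationary version of itself after the first event time $\tau_1$. The key observation is that $L_1$ is a birth-and-death process on the two-state space $\{0,1\}$: from state $0$ it jumps to state $1$ at rate $\lm$ (an arrival to the empty loss system), and from state $1$ it jumps to state $0$ at rate $\mu$ (the single customer completes service, while any arrivals during service are simply lost). This is a reversible two-state CTMC whose unique stationary distribution is $\pi(0) = \mu/(\lm+\mu)$ and $\pi(1) = \lm/(\lm+\mu)$, which one verifies immediately from the detailed-balance (equivalently, global-balance) equation $\lm\,\pi(0) = \mu\,\pi(1)$ together with $\pi(0)+\pi(1)=1$.

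First I would set up the coupling explicitly. Let $\widetilde{L}_1$ denote a stationary version of the same two-state CTMC, i.e.\ $\widetilde{L}_1(0)$ is distributed according to $\pi$ and the process evolves with the same transition rates, driven by the same underlying Poisson clocks as $L_1$ (the same arrival process of rate $\lm$ and the same potential service-completion clock of rate $\mu$). The crucial structural feature of a two-state chain is that \emph{any} transition is a jump to the \emph{other} state, and that other state does not depend on where the chain came from. Consequently, at the first event time $\tau_1$ — whether it is an arrival or a departure, driven by whichever exponential clock rings first — both $L_1$ and $\widetilde{L}_1$ are reset to a state determined solely by which clock fired, independently of their (possibly different) values on $[0,\tau_1)$. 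Hence $L_1(\tau_1) = \widetilde{L}_1(\tau_1)$ almost surely, and by the strong Markov property and the shared driving clocks, $L_1(t) = \widetilde{L}_1(t)$ for all $t \ge \tau_1$.

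Since $\widetilde{L}_1$ is stationary by construction, $\widetilde{L}_1(t)$ has distribution $\pi$ for every $t \ge 0$, and therefore $L_1(t)$ has distribution $\pi$ for all $t \ge \tau_1$. This yields exactly the claimed marginals $P(L_1(t) = 0) = \mu/(\lm+\mu)$ and $P(L_1(t) = 1) = \lm/(\lm+\mu)$ for $t \ge \tau_1$, and in fact the stronger statement that the whole post-$\tau_1$ trajectory is stationary. I do not anticipate a genuine obstacle here; the only point requiring a little care is the verification that the coalescence at $\tau_1$ is clean, i.e.\ that in a two-state chain the destination of the first jump is common to both coupled processes regardless of their initial states. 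This is where the degeneracy to two states (as opposed to an $M/M/1$ queue with infinite buffer, treated separately in the proof of Theorem \ref{thJ2SQp}) is essential, and it is precisely the feature that makes the finite-time coalescence possible; the loss-system structure guarantees that service completions always return the chain to state $0$ and arrivals-while-busy are discarded, so no ambiguity arises.
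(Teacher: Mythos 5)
Your proof is correct and follows essentially the same route as the paper's: couple $L_1$ with a stationary version driven by the same Poisson clocks and observe that, because the chain has only two states and every transition's destination is determined solely by which clock fired, the two processes must coalesce by the first event of $L_1$, after which stationarity is inherited. Your write-up is in fact somewhat more explicit than the paper's (which phrases the same idea via the meeting time $T$ and the coupling inequality before noting that coalescence occurs at the first event), but no new idea is involved.
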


\begin{proof}
Let $L_e := \{L_e(t) : t \ge 0\}$ denote a stationary version of the process $L_1$, namely, $P(L_e(0) = 0) = 1-P(L_e(0) = 1) = \mu/(\lm+\mu)$. 
Let $T$ denote the first time $L_1$ and $L_e$ are equal; $T := \inf\{t \ge 0 : L(t) = L_e(t)\}$, and define the process
\bequ \label{Lx}
L_0(t) := \left\{\begin{array}{ll}
L_1(t) & t < T, \\
L_e(t) & t \ge T.
\end{array}\right.
\eeq
Since $T$ is a stopping time that is finite w.p.1, the strong Markov property implies that $L_0 \deq L_1$. The coupling inequality (e.g., \cite[VII 2a]{asmussen} gives
\bes
\|P(L_1(t) \in \cdot) - \pi(\cdot)\|_{TV} \le P(T > t).
\ees
Clearly, $L_0$ and $L_e$ are equal when the first event (arrival or departure) in either of the two processes occurs, and in particular, when the first event in $L_0$ occurs.
\end{proof}

Similarly to the proof of Lemma \ref{lmStat} we can prove the following result. Recall that $L_m :- \{L_m(t) : t \ge 0\}$ denotes the number-in-system process in an $M/M/(m-1)/0$ queue--a loss system
with $m-1$ servers and no buffer. Let $\tau_m := \inf\{t \ge 0 : L_m(t) = m-1\}$, namely, $\tau_m$ is the first time instant in which all servers are busy.
Note that $\tau_m$ is a proper random variable, i.e., $P(\tau_m < \infty) = 1$.
\begin{lemma} \label{lmStat-m}
If $L_m(0) = 0$, then $L_m$ is stationary for all $t \ge \tau_m$; in particular, for all $t \ge \tau_m$,
\bes
P(L(t) = k) = \pi^{m-1} := {\rho^k/k! \over \sum_{j=0}^{m-1} \rho^j/j!}, \quad k \in \llbracket 1, m-1 \rrbracket.
\ees
\end{lemma}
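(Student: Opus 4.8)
The plan is to mirror the coupling argument used for Lemma \ref{lmStat}, now applied to the $M/M/(m-1)/0$ loss system rather than the single-server one. The key observation driving the whole proof is that, for a birth-and-death process on $\llbracket 0, m-1\rrbracket$ started from $0$, the dynamics are \emph{monotone}: the number-in-system process $L_m$ started from $0$ is dominated (in sample-path stochastic order, via a standard synchronous coupling on the same Poisson arrival and potential-departure clocks) by any version started from a larger initial state. Because $L_m$ can never exceed $m-1$, once two coupled versions meet they stay together forever. The time at which all servers first become busy, $\tau_m$, is therefore the natural candidate for the coupling time, and the heart of the argument is to show that starting from $0$ and the stationary version can be coupled so that they agree from $\tau_m$ onward.

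\begin{proof}[Proof sketch]
Let $L_e := \{L_e(t) : t \ge 0\}$ denote a stationary version of the process $L_m$, so that $P(L_e(t) = k) = \pi^{m-1}(k)$ for all $t \ge 0$, where $\pi^{m-1}$ is the Erlang loss distribution displayed in the statement. Construct $L_m$ (started from $L_m(0)=0$) and $L_e$ on a common probability space using a single Poisson arrival process and a shared family of exponential service clocks, in the synchronous fashion standard for birth-and-death processes. First I would verify that this coupling is monotone, namely that $L_m(t) \le L_e(t)$ for all $t$ up to the coupling time, which holds because the two processes start ordered ($0 \le L_e(0)$) and the synchronous dynamics preserve the order on the totally ordered state space $\llbracket 0, m-1 \rrbracket$. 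Next, let $T := \inf\{t \ge 0 : L_m(t) = L_e(t)\}$ and define the spliced process $L_0$ exactly as in \eqref{Lx}, equal to $L_m$ before $T$ and to $L_e$ afterward; since $T$ is a finite stopping time (the state space is finite and the chain is irreducible and positive recurrent), the strong Markov property gives $L_0 \deq L_m$.

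The remaining step is to identify $T$ with $\tau_m$. By monotonicity, $L_m(t) \le L_e(t)$ until they meet, so the first time they can agree is when the larger process $L_e$ descends to meet $L_m$, or when $L_m$ rises to meet $L_e$; the key point is that $\tau_m$, the first time $L_m$ reaches the top state $m-1$, forces coalescence, because at state $m-1$ the process is at the maximum of the (bounded) state space and the synchronous dynamics leave no room for $L_e$ to sit strictly above it. Hence $T \le \tau_m$. Conversely, since $L_m(0)=0$ and each process is then equal to the stationary version from $T$ on, $L_m$ inherits the stationary law for all $t \ge T$, and in particular for all $t \ge \tau_m$. Combining these yields $P(L_m(t) = k) = \pi^{m-1}(k)$ for all $t \ge \tau_m$ and $k \in \llbracket 1, m-1 \rrbracket$, as claimed.
\end{proof}

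The step I expect to be the main obstacle is making the coalescence claim fully rigorous: one must argue carefully that the synchronous coupling, applied to these two bounded birth-and-death processes, genuinely forces agreement no later than $\tau_m$, and in particular that the ordering $L_m \le L_e$ together with $L_m$ hitting the maximal state $m-1$ leaves the two processes equal from that instant. This is intuitively transparent because the state space is totally ordered and bounded above, but spelling out why the shared departure clocks do not break the coupling (and why $\tau_m$ rather than some earlier or later random time is the correct coalescence epoch) is the part that requires genuine care; everything else reduces to the coupling inequality and the strong Markov property already invoked in the proof of Lemma \ref{lmStat}.
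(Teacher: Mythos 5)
Your argument is correct in outline and shares the paper's overall architecture (couple $L_m$ with a stationary version, show the two agree by time $\tau_m$, splice at the meeting time, invoke the strong Markov property), but the mechanism you use to force coalescence by $\tau_m$ is genuinely different from the paper's. The paper lets $L_m$ and the stationary version $L_\infty$ evolve \emph{independently} until they first meet: since almost surely no two jumps coincide and each jump is $\pm 1$, the difference $L_\infty(t)-L_m(t)$ is skip-free, starts nonnegative (because $L_m(0)=0$) and is nonpositive at $\tau_m$ (because $L_m(\tau_m)=m-1$ is the top of the state space), so it must hit $0$ by $\tau_m$; no monotone coupling is needed. You instead propose a \emph{synchronous} order-preserving coupling and conclude coalescence from $L_m\le L_e$ together with $L_m$ hitting the maximal state. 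That route works, but it carries exactly the burden you flag at the end: for the $M/M/(m-1)/0$ system an order-preserving coupling is not automatic, because one must track \emph{which} servers are busy in each copy (or replace the per-server clocks by a level-indexed death mechanism) to ensure that a shared departure event never reverses the order; making this precise is more work than the statement deserves. The paper's independent-coupling argument buys you the coalescence bound $T\le\tau_m$ essentially for free from skip-freeness, at the cost of a slightly less ``visual'' ordering statement; your version, once the monotone coupling is rigorously constructed, additionally yields the sample-path domination $L_m(t)\le L_e(t)$ for all $t$, which is a stronger intermediate fact but is not needed for the lemma. One small caution: your sentence ``the first time they can agree is when the larger process descends to meet $L_m$, or when $L_m$ rises to meet $L_e$'' is a distraction --- under either coupling all you need is that the difference cannot change sign without vanishing, which is where the $\pm 1$ jump structure (or the preserved order) does the work.
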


\begin{proof}
Let $L_\infty$ denote the stationary version of $L_m$, namely, $L_\infty(0) \deq \pi^m$, for $\pi^m$ in the statement of the lemma.
We couple $L_m$ and $L_\infty$ on the same probability space and allow them to evolve independently of each other until they couple, after which
the two processes follow the path of $L_\infty$ (similarly to the construction of $L_0$ in the proof of Lemma \ref{lmStat}). Since $L_m(0) = 0$, the two processes must have coupled by $\tau_m$,
and so the result follows from the strong Markov property.
\end{proof}

\end{appendix}

\providecommand{\bysame}{\leavevmode\hbox to3em{\hrulefill}\thinspace}
\providecommand{\MR}{\relax\ifhmode\unskip\space\fi MR }
\providecommand{\MRhref}[2]{%
  \href{http://www.ams.org/mathscinet^-getitem?mr=#1}{#2}
}
\providecommand{\href}[2]{#2}

\end{document}